\documentclass[11pt]{article}%
\usepackage[latin9]{inputenc}
\usepackage{amsmath}
\usepackage{amssymb}
\usepackage{color}
\usepackage{amsfonts}
\usepackage{amsthm}
\usepackage{latexsym}
\usepackage{graphicx}
\usepackage{subfigure}
\usepackage{cite}%
\setcounter{MaxMatrixCols}{30}
\providecommand{\U}[1]{\protect\rule{.1in}{.1in}}

\makeatletter
\newtheorem{theorem}{Theorem}

\newtheorem{remark}[theorem]{Remark}
\newtheorem{corollary}[theorem]{Corollary}
\newtheorem{proposition}[theorem]{Proposition}

\textwidth 6.5in
\textheight 8.95in
\voffset = -2cm
\hoffset = -2cm
\begin{document}

\title{Theoretically exact photoacoustic reconstruction from spatially and temporally
reduced data}
\author{Ngoc Do
\and Leonid Kunyansky}
\maketitle

\begin{abstract}
We investigate the inverse source problem for the wave equation, arising in
photo- and thermoacoustic tomography. There exist quite a few theoretically
exact inversion formulas explicitly expressing solution of this problem in
terms of the measured data, under the assumption of constant and known speed
of sound. However, almost all of these formulas require data to be measured
either on an unbounded surface, or on a closed surface completely surrounding
the object. This is too restrictive for practical applications. The
alternative approach we present, under certain restriction on geometry, yields
theoretically exact reconstruction of the standard Radon projections of the
source from the data measured on a finite open surface. In addition, this
technique reduces the time interval where the data should be known. In
general, our method requires a pre-computation of densities of certain
single-layer potentials. However, in the case of a truncated circular or
spherical acquisition surface, these densities are easily obtained
analytically, which leads to fully explicit asymptotically fast algorithms. We
test these algorithms in a series of numerical simulations.

\end{abstract}

\textit{Keywords}: photoacoustic tomography, thermoacoustic tomography,
wave equation, spherical means, explicit inversion formulas, reduced data

\section{Introduction}

We consider the inverse source problem arising in thermo- and photoacoustic
tomography (TAT/PAT)\cite{Kruger,Ora,Kruger1}. There exists a variety of
explicit inversion formulas that solve this problem under certain simplifying
conditions. As data, they use the time-dependent values of acoustic pressure
measured on a surface surrounding the object of interest. However, these
formulas require more data (both in time and in space) than is practical to
measure and is necessary from the theoretical point of view. The goal of this
paper is to develop explicit, theoretically exact formulas for solving the
inverse source problem of TAT/PAT, using sets of data reduced both spatially
and temporally.

TAT and PAT are novel coupled-physics modalities, designed to combine high
resolution of ultrasound techniques with high sensitivity of electromagnetic
waves to electrical and optical properties of biological tissues. In
PAT\cite{Kruger,Ora}, the region of interest (e.g., woman's breast in
mammography) is irradiated with a short laser pulse. In TAT\cite{Kruger1}, a
pulse of microwave radiation is used instead. In both cases, radiation is
partially absorbed by tissues. This raises the temperature of the medium and,
due to thermoelastic expansion, triggers an acoustic wave that is measured by
transducers on the boundary of the object. The propagation of the pressure
wave $p(t,x)$ can be modeled by the wave equation
\begin{equation}%
\begin{cases}
p_{tt}=c^{2}(x)\Delta p,\quad t\geq0,\quad x\in\mathbb{R}^{n}\\
p(0,x)=f(x),\quad p_{t}(0,x)=0,
\end{cases}
\label{E:wave-eq}%
\end{equation}
where $f(x)$ is the initial pressure in the tissues, $c(x)$ is the speed of
sound, and $n$ is the dimension of the space, $n=2,3$. This simplified model
assumes that the absorption of the radiation happens instantaneously, and that
the acoustic wave propagates in the open space, i.e., without reflecting from
transducers or other parts of the acquisition scheme. It also neglects
absorption and dispersion of acoustic waves in tissues.

Let us assume that the support $\Omega_{0}$ of $f(x)$ lies inside a larger,
open, bounded, simple-connected region $\Omega^{-}$ with a smooth boundary
$\Gamma.$ The pressure is measured by transducers placed along an
\textbf{observation surface} $S\subset\Gamma$. This yields data $g(t,y)$:
\begin{equation}
g(t,y)\equiv p(t,y),\quad(t,y)\in\mathbb{R}^{+}\times S. \label{E:wave-data}%
\end{equation}
The\textbf{ inverse source problem} of PAT/TAT consists of reconstructing
initial pressure $f(x)$ from the measurements $g(t,y)$, $(t,y)\in
\mathbb{R}^{+}\times S.$ \ This problem received significant attention from
mathematicians in the recent years. Important theoretical and numerical
results were obtained concerning uniqueness and stability of the
reconstruction and design of efficient computational algorithms, see reviews
\cite{AKK,KuKu,KuKuRev} and references therein. A version of this problem that
accounts for multiple reflection of acoustic waves from the transducer arrays
or walls of the tank has been also considered, and a number of results were
obtained (see \cite{Kun-Lin} for results and references). For completeness,
one should also mention the important Quantitative PAT problem (see, e.g.
\cite{CoxQPAT,RenBal}), which relies on the solution of the inverse source
problem as the first step. Moreover, an inverse source problem of exactly the
same form as considered here, also arises in other hybrid modalities. For
example, Magnetoacoustoelectric, Acoustoelectric, and Ultrasound modulated
optical tomography utilize, as the first step, the so-called synthetic
focusing \cite{KuKuAET,Kun-MAET,KIW-eng}. This procedure is mathematically
equivalent to the above-mentioned problem of PAT/TAT.

Here we concentrate on the practically important case of constant sound speed.
This approximation is considered acceptable for wave propagation in soft
tissues, as, for example, in the case of breast imaging. Such a simple model
is frequently used by practitioners, since the position-dependent parameters
describing attenuation, diffraction, and dispersion of acoustic waves in a
particular patient are usually not available. On the other hand, due to
relative simplicity of the constant sound speed case, a significant number of
explicit inversion formulas
\cite{Anders,FPR,Rubin,MXW2,Salman,Nguyen,HP1,HP2,Nat12,Haltm-ellipse,Fawcett,Finch07,Den,Pala,PalaBook,Kun-corn,Kun-cube,LK2007b,Palam-limit}
and explicit series solutions
\cite{LK2007a,Kun-cyl,Haltm-series,Norton1,Norton2,Amb1,Amb2} were obtained in
recent years.

In the case of constant sound speed, without loss of generality one may assume
$c(x)\equiv1.$ Then%
\begin{equation}
g(t,y)=\frac{\partial}{\partial t}G(t,y),\qquad G(t,y)\equiv\int%
\limits_{\Omega_{0}}f(x)\Phi_{n}(t,x-y)dx, \label{E:Gr-conv}%
\end{equation}
where $\Phi_{n}(t,x)$ is the fundamental solution \ (retarded Green's
function) of the free-space wave equation
\begin{equation}
u_{tt}(t,x)=\Delta u(t,x),\qquad t\in\mathbb{R},\qquad x\in\mathbb{R}^{n}.
\label{E:wave-constant}%
\end{equation}
It is well known that%
\begin{equation}
\Phi_{2}(t,x)=\frac{H(t-|x|)}{2\pi\sqrt{t^{2}-|x|^{2}}},\qquad\Phi
_{3}(t,x)=\frac{\delta(t-|x|)}{4\pi|x|}, \label{E:Gr-funs}%
\end{equation}
where $\delta(t)$ is the Dirac's delta distribution, and $H(t)$ is the
Heaviside function, equal to 1 when $t>0,$ and equal to 0 otherwise.
Importantly, $\Phi_{n}(t,x)$ reflects the finite speed of propagation of the
sound waves:%
\begin{equation}
\Phi_{n}(t,x)=0\text{ if }t<|x|. \label{E:causal}%
\end{equation}
\qquad Using equations (\ref{E:Gr-conv}) and (\ref{E:Gr-funs}) one can relate
the data $g(t,y)$ to the integrals $I(r,y)$ of $f$ over spheres (circles) with
the centers lying on $S:$%
\begin{align}
g(t,y)  &  =\frac{\partial}{\partial t}\frac{I(t,y)}{4\pi t},\qquad n=3,\qquad
y\in S,\label{E:wave-to-int-3D}\\
g(t,y)  &  =\frac{\partial}{\partial t}\int\limits_{0}^{t}\frac{I(r,y)}%
{2\pi\sqrt{t^{2}-r^{2}}}dr,\qquad n=2,\qquad y\in S,\label{E:wave-to-int-2D}\\
I(r,y)  &  \equiv r^{n-1}\int\limits_{\mathbb{S}^{n-1}}f(y+r\hat{l})d\hat{l}.
\label{E:def-int}%
\end{align}
If $\Omega^{-}$ is bounded, the support of $r \mapsto I(r,y)$ ($y \in S$ is
fixed) is contained in the interval $M\equiv\lbrack0,\mathrm{diam}(\Omega
^{-})],$ $n=2,3.$ In $\mathbb{R}^{3}$ (3D), due to (\ref{E:wave-to-int-3D})
wave data $g(t,y)$ are also supported in $t$ within $M$ -- this is
manifestation of the Huygens' principle. Finding $I(r,y)$ from $g(t,y)$ is
trivial in this case. In $\mathbb{R}^{2}$ (2D), $t \mapsto g(t,y)$ is, in
general, not compactly supported in $\mathbb{R}$. Integrals $I(r,y)$ can be
recovered from $g(t,y)$ in 2D by inverting the Abel transform
(\ref{E:wave-to-int-2D}):%
\begin{equation}
I(r,y)=4\int\limits_{0}^{r}\frac{g(t,y)}{\sqrt{r^{2}-t^{2}}}dt,\qquad
n=2,\qquad y\in S,\qquad r\in M. \label{E:inverse-Abel}%
\end{equation}
We see that, for a fixed $y\in S,$ explicit recovery of all values of $I(r,y)$
in $r$ requires only the knowledge of $g(t,y)$ for all $t\in M.$ Moreover, by
combining (\ref{E:inverse-Abel}) with (\ref{E:wave-to-int-2D}) one can
explicitly recover values of $g(t,y)$ for $t\in(\mathrm{diam}(\Omega
^{-}),\infty)$ from values of $g(t,y)$ for $t\in M.$

If $\Omega^{-}$ is unbounded, the supports of $r \mapsto I(r,y)$ and $t
\mapsto g(t,y)$ are unbounded in both 2D and 3D.

All of the above mentioned explicit inversion formulas for bounded domains
(with the exception of \cite{Amb1,Amb2}) require knowledge of $I(r,y)$ or
$g(t,y)$ on $M\times\Gamma$ in 3D. \ In 2D, the data are either $I(r,y)$ on
$M\times\Gamma$, or $g(t,y)$ on $\mathbb{R}^{+}\times\Gamma$; however, as
explained above, values of $g(t,y)$ on $(\mathrm{diam}(\Omega^{-}%
),\infty)\times\Gamma$ can be explicitly reconstructed from those known on
$M\times\Gamma.$ Explicit formulas for unbounded domains in 2D or 3D require
data ($I$ or $g)$ to be measured on infinite intervals in time or in radius.

\section{Formulation of the problem}

In practical applications, measurements cannot be performed over unbounded
surfaces. Therefore, inversion formulas that assume such data, have to be
applied to sets that are\emph{ truncated in space}. All known explicit
inversion formulas designed for bounded measurement surfaces (except
\cite{Palam-limit}) use data given on the full boundary $\Gamma.$
Unfortunately, in applications such data also have to be truncated. Indeed, in
most medical applications of PAT/TAT (e.g., in breast imaging) the region of
interest (ROI) is not the whole human body but rather a part of it. Then, one
cannot surround the ROI by the detectors from all sides. Spatial truncation of
data makes existing inversion formulas inexact and leads to significant
artifacts in the reconstructed images. We, thus, are looking for
reconstruction techniques that use data reduced spatially, i.e. measured on
proper subsets $S$ of $\Gamma.$

Perhaps the most important reason to use the data\emph{ truncated temporally}
(i.e., collected over a proper subset of $M$ in time or radius), is the
deterioration of acoustic waves as they propagate through the tissues. This
happens due to absorption, diffraction, and dispersion \cite{Anastas} that are
not reflected by the wave equation (\ref{E:wave-constant}). In order to reduce
the effect of such deterioration, practitioners (see e.g., \cite{Anastas}) use
the so-called "half-time reconstruction" that consists of simply truncating
the data in time and applying one of the known inversion formulas (e.g., the
\textquotedblleft universal" backprojection formula \cite{MXW2}). Such
truncation reduces some artifacts, but introduces new errors, since the
formula applied to the partial data is no longer exact.

From the theoretical standpoint, the inverse source problem of TAT/PAT can be
solved with significantly less data. Indeed, it is known that this problem is
stably solvable if the acquisition surface $S$ and the support $\Omega_{0}$ of
$f$ satisfy the \textbf{visibility condition}
\cite{XWAK,XWAK2,KuKu,KuKuRev,FPR,StUhl}. In the case of the constant speed of
sound this condition requires that for each point $x\in\Omega_{0}$ and for
each direction $l,$ a straight line passing through $x$ parallel to $l$ would
intersect $S$ at least once. Such a condition can be satisfied by a
significantly reduced set of measurements. For example, if $\Omega_{0}%
=\Omega^{-}$ is a ball of radius $R$ and $S$ is its boundary, the set of
spheres with centers on $S$ and radii in the interval $[0,R]$ satisfy the
visibility condition with respect to $\Omega_{0}$, and thus the inverse
problem can be solved with data $I(r,y)$ or $g(t,y)$ given on a temporally
reduced set $[0,R]\times S$. Another example is the problem with $\Omega
_{0}=\Omega^{-}$ being a lower half of a ball of radius $R,$ and $S$ a
concentric lower half-sphere of the same radius. Then the visibility condition
is satisfied for radii covering the range $[0,2R]$ and centers restricted to
the open surface $S.$ The known exact inversion formulas, however, can only
work with data measured either on a whole sphere, or on a full boundary of a half-sphere.

The only existing exact solution of the inverse source problem of PAT/TAT with
temporally reduced data was obtained in \cite{Amb1,Amb2,Amb-Roy}. There, the
problem was solved for circular, spherical, and elliptical domains by
expanding the function and the data in circular/spherical harmonics, and
deriving Cormack-like formulas. However, the resulting formulas contain
convolutions with high-order Chebyshev polynomials. Such formulas can be
difficult to accurately implement numerically; moreover, similarly to the
original Cormack's formulas (see e.g. \cite{Natt4}) the inversion is
ill-posed. Numerical implementation based on numerical inversion of the
arising Volterra integral equation was reported in \cite{Amb-Roy} for the case
of spherical acquisition surface in 3D.

An example of a theoretically exact reconstruction technique with a spatially
reduced set is given in \cite{Palam-limit}, where a 2D problem is solved with
$\Omega_{0}$ being a half disk and $S$ its diameter. This combination of
acquisition surface and support of the source does not satisfy the visibility
condition, so the reconstruction procedure is exponentially unstable.

Other techniques, proposed for the problem with a reduced set of data in the
past are either approximate \cite{Kun-open,PS1,PS2}, or iterative, such as,
e.g. \cite{Stef-Neumann}. The latter method consists of treating the distorted
image as a first approximation and then refining it iteratively. Such an
algorithm yields good results in 2D simulations~\cite{Stef-Neumann} based on
the finite-difference time reversal. However, convergence of the corresponding
Neumann series has not been proven theoretically. Moreover, on large
computational grids in 3D such an algorithm would require many hours of
computation, having the complexity of each iteration $\mathcal{O}(m^{4})$
flops for a grid of size $m\times m\times m.$

In this paper, instead of reconstructing $f(x)$ directly, we solve a problem
of reconstructing the standard Radon projections of $f(x)$ from data reduced
spatially and/or temporally. Reconstructing function $f(x)$ thus can be
completed by inverting the standard Radon transform numerically. However, the
problem of recovering a function from the full set of its Radon projections is
very well understood by now, and a variety of accurate and efficient numerical
algorithms are well known\cite{Natt4}, so we will simply omit the discussion
of it. Our technique is based on representation of plane waves within a region
by a single layer potential supported on the region's boundary$.$ Such
representations are discussed in Section~\ref{S:repres}. The rest of the
papers is organized as follows. Reconstruction of the Radon projections of a
function from spatially and/or temporally reduced data for general acquisition
surfaces is presented in Section \ref{S:Recovery}. The special cases of
truncated circular (in 2D) and spherical (in 3D) acquisition surfaces are
considered in Section \ref{S:Spheres}. The latter section also contains the
description of efficient numerical algorithms and results of numerical
simulations demonstrating the work of these techniques.

\section{Representing plane waves by single layer potentials}

\label{S:repres}

The goal of this section is to represent a propagating delta wave
$\delta(-\omega\cdot x+t)$ (where $\omega$ is a unit vector) by a single layer
potential. Without loss of generality, let us assume that the smallest ball
containing $\Omega^{-}$ is found, and that $\Omega^{-}$ is translated so that
this ball of radius $R\ $is centered at the origin; we will denote it by
$B(0,R)$ (see Figure~\ref{F:support}(a)). \begin{figure}[h]
\begin{center}%
\begin{tabular}
[c]{cc}%
\includegraphics[scale = 0.9]{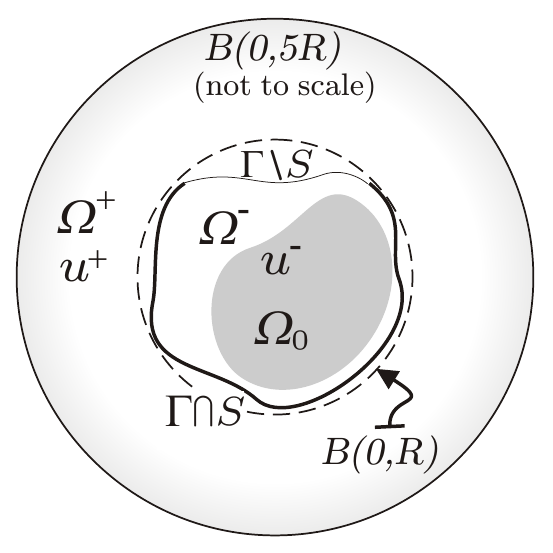} &
\includegraphics[scale = 0.9]{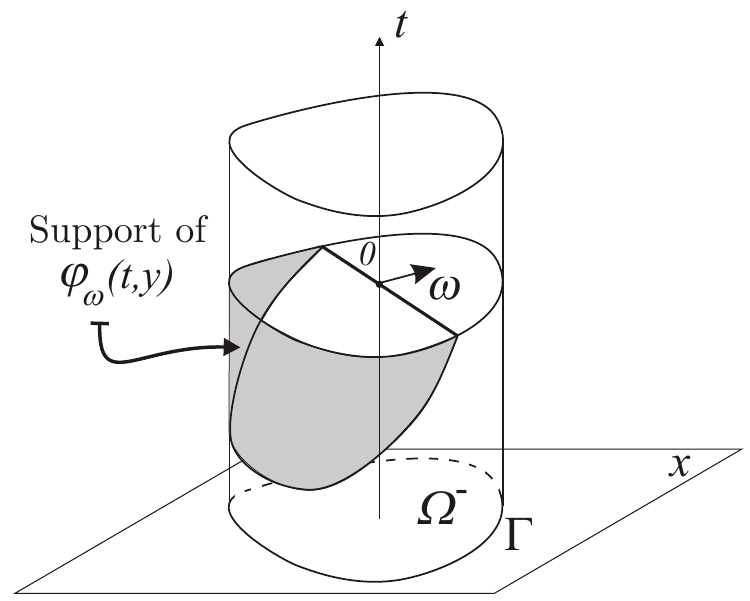}\\
(a) & (b)\\
&
\end{tabular}
\end{center}
\par
\vspace{-8mm}\caption{(a) geometry of the problem (b) support of density
$\varphi_{\omega}(t,y)$ }%
\label{F:support}%
\end{figure}The support of the wave $\delta(-\omega\cdot x+t)$ is a hyperplane
$L(t,\omega)$ given by equation $t=\omega\cdot x.$ Then, $L(t,\omega)$
intersects $\Omega^{-}$ for all values of $t$ lying in the time interval
$\mathcal{T}(\omega)\equiv$ $(T_{0}(\omega),T_{1}(\omega))$ with $-R\leq
T_{0}(\omega)<T_{1}(\omega)\leq R.$ We note for future use the equalities
\begin{equation}
T_{1}(-\omega)=-T_{0}(\omega),\qquad T_{0}(-\omega)=-T_{1}(\omega).
\label{E:endpoints}%
\end{equation}
We would like to represent wave $\delta(-\omega\cdot x+t)$ by the following
retarded potential%
\[
\delta(-\omega\cdot x+\tau)=\int\limits_{T_{0}(\omega)}^{\tau}\int%
\limits_{\Gamma}\Phi_{n}(\tau-t,x-y)\varphi_{\omega}(t,y)dydt,\qquad
x\in\Omega^{-},\qquad\tau\in(T_{0}(\omega),0],
\]
where $\varphi_{\omega}(t,y)$ is a certain distribution supported on
$\mathcal{T}(\omega)\times\Gamma.$ One way of doing this is through a solution
of a surface scattering problem for the wave equation as described below. We
will investigate the latter problem following \cite{Sayas}; we refer to this
book for precise definitions of spaces and distributions we use below.

From now on we will assume that the boundary $\Gamma$ of $\Omega^{-}$ is
Lipschitz (see \cite{Sayas} for the precise definition). Let us denote by
$\Omega^{+}$ the complement to the closure of~$\Omega^{-}$, i.e. $\Omega
^{+}\equiv\mathbb{R}^{n}\backslash\overline{\Omega^{-}}.$ Consider a plane
wave $\mu_{\omega}(t,x)$ of the form
\begin{equation}
\mu_{\omega}(t,x)\equiv s(\omega\cdot x-t), \label{E:planewave}%
\end{equation}
where $s:\mathbb{R}\rightarrow\mathbb{R}$ is a continuously differentiable and
piece-wise twice-differentiable function on $\mathbb{R}$, vanishing on the
interval $(0,+\infty)$%
\begin{equation}
s(t)=0,\qquad t\in(0,+\infty),\qquad s\in C^{1}(\mathbb{R})\cap C^{2}%
(\text{a.e. on }\mathbb{R}). \label{E:finitesp}%
\end{equation}
Let us restrict our attention to a large bounded domain containing all our
objects; as such, we will use the ball $B(0,5R)$ of radius $5R$ centered at
the origin (see figure~\ref{F:support}(a)). For each fixed $t,$ function
$\mu_{\omega}(t,x)$ belongs to the Sobolev class $H^{2}(B(0,5R))$ with
$\Delta\mu_{\omega}\in L^{2}(B(0,5R))$. Then, there exists (see \cite{Sayas},
section 3.3)\ a unique tempered distribution $\xi_{\omega}(t,\cdot)$ with
values in $H^{-1/2}(\Gamma)$, with $t\mapsto\xi_{\omega}(t,\cdot)$ supported
on $\mathcal{T}(\omega),$ and such that the single layer potential
\[
\left(  S\ast\xi_{\omega}\right)  (\tau,x)\equiv\int\limits_{T_{0}(\omega
)}^{\tau}\int\limits_{\Gamma}\Phi_{n}(\tau-t,x-y)\xi_{\omega}(t,y)dydt,\qquad
x\in\mathbb{R}^{n}\backslash\Gamma,\qquad\tau\in\mathcal{T}(\omega),
\]
defines distributional solutions $u^{+}$ and $u^{-}$ of the wave equation
$\frac{\partial^{2}}{\partial t^{2}}u^{\pm}(t,x)=\Delta u^{\pm}(t,x)$ on
$\Omega^{+}$ and on $\Omega^{-}$ as%
\[
u^{\pm}(t,x)=\left(  S\ast\xi_{\omega}\right)  (t,x),\qquad x\in\Omega^{\pm}.
\]
Both $u^{+}$ and $u^{-}$ are tempered distributions in $t$ with values in
$H_{\Delta}^{1}(\Omega^{\pm}),$ where $H_{\Delta}^{1}(\Omega^{\pm})$ is the
space of functions from $H^{1}(\Omega^{\pm})$ whose Laplacian is in
$L^{2}(\Omega^{\pm}).$ Moreover, $\xi_{\omega}$ is such that
\begin{equation}
\mu_{\omega}(t,x)=u^{-}(t,x)\equiv\int\limits_{T_{0}(\omega)}^{\tau}%
\int\limits_{\Gamma}\Phi_{n}(\tau-t,x-y)\xi_{\omega}(t,y)dydt,\qquad
x\in\Omega^{-},\qquad\tau\in\mathcal{T}(\omega), \label{E:single-layer}%
\end{equation}
where the equality is not point-wise, but is understood in the sense of
tempered distributions in $t$ with values in $H_{\Delta}^{1}(\Omega^{-}).$

On the other hand, distribution \ $u^{+}(t,x)$ solves the problem of soft
scattering of the incoming wave $\mu_{\omega}(t,x)$ by the surface $\Gamma.$
Indeed, following \cite{Sayas}, let us define the exterior and interior trace
operators $\gamma^{\pm}:H_{\Delta}^{1}(\mathbb{R}^{n}\backslash\Gamma
)\rightarrow H^{1/2}(\Gamma)$ and exterior and interior normal derivatives
$\partial_{n}^{\pm}:H_{\Delta}^{1}(\mathbb{R}^{n}\backslash\Gamma)\rightarrow
H^{-1/2}(\Gamma).$ Then
\[
\gamma^{+}u^{+}(t,\cdot)=-\gamma^{+}\mu_{\omega}(t,\cdot),\qquad
t\in\mathcal{T}(\omega),
\]
so that the total field $u^{tot}(t,x)=\mu_{\omega}(t,x)+u^{+}(t,x)$ satisfies
the zero Dirichlet condition on $\Gamma.$ Importantly, the jump of the normal
derivatives across $\Gamma$ satisfies the following jump relation (in the
sense of $H^{-1/2}(\Gamma)$ distributions):%
\begin{equation}
\partial_{n}^{-}\mu_{\omega}(t,\cdot)-\partial_{n}^{+}u^{+}(t,\cdot)=\mathrm{
}\xi_{\omega}(t,\cdot),\qquad t\in\mathcal{T}(\omega). \label{E:jump}%
\end{equation}

The relation between the single layer representation of the wave $\mu_{\omega
}(t,x)$ in $\Omega^{-},$ and the solution of the scattering problem
$u^{+}(t,x)$ leads to the following important observation on support of the
density $\mathrm{ }\xi_{\omega}(t,\cdot).$ Due to (\ref{E:finitesp}),\ the
incoming wave $\mu_{\omega}(t,x)$ vanishes in the region $W$ consisting of
points with $\omega\cdot x>t$:%
\begin{equation}
\mu_{\omega}(t,x)=0,\quad\forall(t,x)\in W,\quad W\equiv\{(t,x)|\ t\in
\mathcal{T}(\omega),\ x\in\mathbb{R}^{n},\ \omega\cdot x>t\}. \label{E:def-W}%
\end{equation}
Due to the finite speed (equal to 1) of propagation of waves governed by the
wave equation, total field $u^{tot}$ (and, hence, scattered wave $u^{+}$)
vanishes\footnote{Vanishing of $u^{tot}(t,x)$ in $W^{+}$ can be proven
formally by considering the energy $\mathcal{E}(t)$ within the region
$W_{B}^{+}\equiv W^{+}\cap(\mathbb{R}\times B(0,5R)),$ defined as
$\mathcal{E}(t)\equiv\int_{W_{B}^{+}}\left[  (\nabla u^{tot})^{2}+\left(
\frac{\partial}{\partial t}u^{tot}\right)  ^{2}\right]  dxdt.$ Note that
$\mathcal{E}(T_{0}(\omega))=0$. The only possible source of energy in
$W_{B}^{+}$ is the part of the boundary $\partial_{0}W\equiv W\cap
(\mathbb{R\times}\Gamma).$ However, since $u^{tot}(t,x)$ vanishes on
$\partial_{0}W$, the flux of energy through $\partial_{0}W$ is zero, and
$\mathcal{E}(t)=0$ for all $t\in\mathcal{T}(\omega)$.} in $W^{+}\equiv
W\cap(\mathcal{T}(\omega)\times\Omega^{+})$. Therefore, jump condition
(\ref{E:jump}) implies that
\begin{equation}
\mathrm{ }\xi_{\omega}(t,x)=0,\qquad\forall(t,x)\in W\cap(\mathbb{R\times
}\Gamma). \label{E:support-psi}%
\end{equation}
Moreover, due to uniqueness of $\mathrm{ }\xi_{\omega}(t,x)$ for a given
$\mu_{\omega}(t,x),$ support condition (\ref{E:support-psi}) holds
independently of the way density $\mathrm{ }\xi_{\omega}(t,x)$ was found. We
summarize these results in

\begin{proposition}
\label{T:exist}Given a Lipschitz domain $\Omega^{-}$ and plane wave
$\mu_{\omega}(t,x)$ satisfying (\ref{E:planewave}), (\ref{E:finitesp}), there
exists a unique tempered distribution $\mathrm{ }\xi_{\omega}(t,\cdot)$
supported on $\mathcal{T}(\omega)$ with values in $H^{-1/2}(\Gamma),$ such
that $\mu_{\omega}(t,x)$ is represented in $\mathcal{T}(\omega)\times
\Omega^{-}$ by the single layer potential in the form (\ref{E:single-layer}).
Moreover, density $\mathrm{ }\xi_{\omega}$ vanishes on $W\cap(\mathbb{R\times
}\Gamma),$ where $W$ is defined by (\ref{E:def-W}).
\end{proposition}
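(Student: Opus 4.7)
The plan is to organize the scattering-based construction already sketched in the preceding discussion into a clean existence/uniqueness/support argument, leaning on the retarded-layer-potential theory of~\cite{Sayas}.

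First, for existence, I would set up the Dirichlet scattering problem: look for a tempered distribution $u^{+}$ in $t$ with values in $H_{\Delta}^{1}(\Omega^{+})$ that solves the wave equation on $\Omega^{+}$, vanishes for $t<T_{0}(\omega)$, and satisfies $\gamma^{+}u^{+}(t,\cdot)=-\gamma^{+}\mu_{\omega}(t,\cdot)$ on $\Gamma$. The regularity assumption (\ref{E:finitesp}) makes $\gamma^{+}\mu_{\omega}$ lie in the right trace space, so the general well-posedness theory of \cite{Sayas} yields a unique $u^{+}$. On the interior side I would simply set $u^{-}\equiv\mu_{\omega}$, which solves the wave equation in $\Omega^{-}$. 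Define $\xi_{\omega}\equiv\partial_{n}^{-}\mu_{\omega}-\partial_{n}^{+}u^{+}$; this is a tempered distribution with values in $H^{-1/2}(\Gamma)$, supported in $\mathcal{T}(\omega)$ because both $u^{\pm}$ vanish for $t<T_{0}(\omega)$ and $\mu_{\omega}$ is constant (equal to zero) in $x$ on $\Gamma$ for $t>T_{1}(\omega)$. By the representation theorem in section~3.3 of~\cite{Sayas}, the pair $(u^{-},u^{+})$ coincides with the single-layer potential generated by its own jump, which is exactly (\ref{E:single-layer}).

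Second, for uniqueness I would suppose $\xi_{\omega}$ and $\tilde{\xi}_{\omega}$ both realize (\ref{E:single-layer}). Then $\eta\equiv\xi_{\omega}-\tilde{\xi}_{\omega}$ is a tempered distribution with values in $H^{-1/2}(\Gamma)$ supported in $\mathcal{T}(\omega)$ whose single-layer potential vanishes in $\Omega^{-}$. Continuity of the single-layer potential across $\Gamma$ gives $\gamma^{+}(S*\eta)=0$; together with zero initial data and the vanishing behaviour for large $|x|$ guaranteed by causality, the exterior Dirichlet problem in \cite{Sayas} has only the trivial solution, so $(S*\eta)\equiv 0$ in $\Omega^{+}$ as well. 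Applying the jump relation (\ref{E:jump}) then forces $\eta=0$.

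Third, for the support claim I would implement the energy argument from the footnote. Since $\mu_{\omega}$ vanishes on $W$ by (\ref{E:finitesp})--(\ref{E:def-W}), and $\gamma^{+}u^{+}=-\gamma^{+}\mu_{\omega}=0$ on $\partial_{0}W\equiv W\cap(\mathbb{R}\times\Gamma)$, the total field $u^{tot}=\mu_{\omega}+u^{+}$ vanishes on $\partial_{0}W$. Truncating to the large ball $B(0,5R)$ and differentiating the standard wave-equation energy $\mathcal{E}(t)$ on the time slab $W_{B}^{+}\equiv W^{+}\cap(\mathbb{R}\times B(0,5R))$, the contribution from the spatial outer boundary is zero by causality (the slab is chosen so no signal reaches it within $\mathcal{T}(\omega)$), and the contribution from $\partial_{0}W$ vanishes because the trace of $u^{tot}$ is zero there; hence $\mathcal{E}(t)\equiv 0$ and $u^{tot}\equiv0$ on $W^{+}$. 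Thus $u^{+}=0$ on $W^{+}$, so both $\partial_{n}^{-}\mu_{\omega}$ and $\partial_{n}^{+}u^{+}$ vanish on $\partial_{0}W$, and (\ref{E:jump}) yields $\xi_{\omega}=0$ on $W\cap(\mathbb{R}\times\Gamma)$.

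The main obstacle will be the energy argument in paragraph three: $u^{+}$ is only a distributional solution, so to differentiate $\mathcal{E}(t)$ and perform integration by parts I expect to first mollify $s$ in (\ref{E:planewave}) to obtain a smooth incoming wave $\mu_{\omega}^{\varepsilon}$, run the pointwise energy identity on the approximation, and then pass to the limit in the distributional topology of $H_{\Delta}^{1}(\Omega^{+})$-valued distributions guaranteed by the continuous dependence theorems of \cite{Sayas}.
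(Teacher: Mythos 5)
Your overall route is the paper's: cite the retarded-potential theory of \cite{Sayas} for existence and uniqueness of the causal density, tie the exterior part of the potential to a scattering problem, and deduce the support statement from causality via the energy argument of the footnote. Your uniqueness paragraph and your support paragraph (including the correct observation that the energy identity should be run on a mollified wave and then passed to the limit) are essentially sound.

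The existence step, however, would fail as written. You prescribe $\gamma^{+}u^{+}=-\gamma^{+}\mu_{\omega}$, set $u^{-}=\mu_{\omega}$, define $\xi_{\omega}=\partial_{n}^{-}\mu_{\omega}-\partial_{n}^{+}u^{+}$, and invoke the representation theorem to conclude that the pair $(u^{-},u^{+})$ equals the single-layer potential of this jump. But a retarded single-layer potential is continuous across $\Gamma$, so that representation theorem applies only to pairs whose Dirichlet traces match; your pair has the trace jump $\gamma^{-}u^{-}-\gamma^{+}u^{+}=2\gamma\mu_{\omega}\neq 0$, so its Kirchhoff representation is $S\ast\xi_{\omega}$ plus a nontrivial double-layer term with density $2\gamma\mu_{\omega}$, and hence $S\ast\xi_{\omega}\neq\mu_{\omega}$ in $\Omega^{-}$. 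The correct construction takes the exterior field with matching trace $\gamma^{+}u^{+}=+\gamma\mu_{\omega}$ --- equivalently, it solves the boundary integral equation $V\ast\xi_{\omega}=\gamma\mu_{\omega}$, which is exactly what \cite{Sayas}, Section~3.3, supplies and what the paper cites for existence; the scattering interpretation enters only afterwards, for the support claim, where the sign is immaterial because $\gamma\mu_{\omega}=0$ on $W\cap(\mathbb{R}\times\Gamma)$ (which is why your third paragraph still goes through). The minus sign does appear in the paper's own display preceding (\ref{E:jump}), but the paper does not build existence on it. A secondary slip: your reason for $\mathrm{supp}\,\xi_{\omega}\subset\mathcal{T}(\omega)$ at the upper end --- that $\mu_{\omega}$ vanishes on $\Gamma$ for $t>T_{1}(\omega)$ --- is false for a general $s$ satisfying (\ref{E:finitesp}) (take $s(\tau)=\tau_{-}^{2}/2$); the representation (\ref{E:single-layer}) is only required for $\tau\in\mathcal{T}(\omega)$, so by causality the density can simply be truncated to $t<T_{1}(\omega)$.
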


Let us now consider a particular choice of wave $\mu_{\omega}(t,x).$ Let us
define function $\tau_{-}^{2}$ as follows
\begin{equation}
\tau_{-}^{2}=\left\{
\begin{array}
[c]{cc}%
0, & \tau\geq0,\\
\tau^{2}, & \tau<0,
\end{array}
\right.  \label{E:special}%
\end{equation}
and choose $\mu_{\omega}(t,x)\equiv\frac{1}{2}(\omega\cdot x-t)_{-}^{2}$.
According to Proposition \ref{T:exist}, there is density $\xi_{\omega}$ \ such
that
\begin{equation}
\frac{1}{2}(\omega\cdot x-\tau)_{-}^{2}=\int\limits_{T_{0}(\omega)}^{\tau}%
\int\limits_{\Gamma}\Phi_{n}(\tau-t,x-y)\xi_{\omega}(t,y)dydt,\qquad
x\in\Omega^{-},\qquad\tau\in\mathcal{T}(\omega), \label{E:delta-layer}%
\end{equation}
with equality understood in the sense of distributions. We will extend
$\xi_{\omega}$ by 0 to -$\infty$ and use (\ref{E:causal}) to extend the
integration interval to $(-\infty,T_{1}(\omega))$:%
\begin{equation}
\frac{(\omega\cdot x-\tau)_{-}^{2}}{2}=\int\limits_{-\infty}^{T_{1}(\omega
)}\int\limits_{\Gamma}\Phi_{n}(\tau-t,x-y)\xi_{\omega}(t,y)dydt=\int%
\limits_{\tau-T_{1}(\omega)}^{\infty}\int\limits_{\Gamma}\Phi_{n}%
(s,x-y)\xi_{\omega}(\tau-s,y)dyds, \label{E:work}%
\end{equation}
\bigskip where $x\in\Omega^{-}$ and $\tau\in\mathcal{T}(\omega).$
Differentiating (\ref{E:work})\ thrice in $\tau$ and taking into account
(\ref{E:causal}) yields:%
\begin{equation}
\delta(\tau-\omega\cdot x)=\int\limits_{\tau-T_{1}(\omega)}^{\infty}%
\int\limits_{\Gamma}\Phi_{n}(s,x-y)\frac{\partial^{3}}{\partial\tau^{3}}%
\xi_{\omega}(\tau-s,y)dyds=\int\limits_{T_{0}(\omega)}^{\tau}\int%
\limits_{\Gamma}\Phi_{n}(\tau-t,x-y)\varphi_{\omega}(t,y)dydt,
\label{E:single-delta}%
\end{equation}
with $x\in\Omega^{-},$ $\tau\in\mathcal{T}(\omega),$ where $\varphi_{\omega
}(t,y)$ is defined as the following distributional derivative:
\begin{equation}
\varphi_{\omega}(t,\cdot)\equiv\frac{\partial^{3}}{\partial t^{3}}\xi_{\omega
}(t,\cdot),\qquad t\in\mathcal{T}(\omega). \label{E:third-der}%
\end{equation}
Importantly, $\varphi_{\omega}(t,y)$ has the same support (in the sense of
distributions) as $\xi_{\omega}(t,y),$ i.e.
\begin{equation}
\varphi_{\omega}(t,y)=0\text{ on }W\cap(\mathbb{R\times}\Gamma).
\label{E:small_support}%
\end{equation}
We thus have proven

\begin{proposition}
\label{T:exist-Heavi}Given a Lipschitz domain $\Omega^{-},$ there exists a
unique distribution $\varphi_{\omega}(t,y)$ defined by equations
(\ref{E:special}), (\ref{E:delta-layer}), (\ref{E:third-der}) such that delta
wave $\delta(-\omega\cdot x+\tau)$ is represented in $\mathcal{T}%
(\omega)\times\Omega^{-}$ by the single layer potential in the form
(\ref{E:single-delta}). Moreover, density $\varphi_{\omega}(t,y)$ vanishes on
$W\cap(\mathbb{R\times}\Gamma),$ where $W$ is defined by (\ref{E:def-W}).
\end{proposition}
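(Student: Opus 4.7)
The plan is to reduce Proposition~\ref{T:exist-Heavi} to Proposition~\ref{T:exist} by a judicious choice of incoming wave profile and three distributional differentiations in $\tau$. First I would verify that the profile $s(r)=\tfrac{1}{2}r_{-}^{2}$ satisfies the hypotheses of (\ref{E:finitesp}): $s'(r)=r_{-}$ is continuous, $s''(r)=H(-r)$ exists almost everywhere, and $s$ vanishes on $(0,+\infty)$. Applying Proposition~\ref{T:exist} to $\mu_{\omega}(t,x)=\tfrac{1}{2}(\omega\cdot x-t)_{-}^{2}$ immediately supplies a unique $H^{-1/2}(\Gamma)$-valued density $\xi_{\omega}$, supported in $\mathcal{T}(\omega)$, realizing $\mu_{\omega}$ as a single-layer retarded potential on $\mathcal{T}(\omega)\times\Omega^{-}$ --- this is exactly equation (\ref{E:delta-layer}) --- and moreover vanishing on $W\cap(\mathbb{R}\times\Gamma)$ by (\ref{E:support-psi}).

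Second, I would rewrite (\ref{E:delta-layer}) in translation-invariant convolutional form as in (\ref{E:work}). Extending $\xi_{\omega}$ by zero for $t<T_{0}(\omega)$ is harmless, and the causality property (\ref{E:causal}) allows the cutoff at $T_{1}(\omega)$ to be replaced by integration to $+\infty$ in the inner integral. In this form $\partial_{\tau}$ commutes with the spatial convolution against $\Phi_{n}$, so I would compute the left-hand side of (\ref{E:work}) after three $\tau$-differentiations: a routine distributional calculation gives $\partial_{\tau}^{3}\bigl(\tfrac{1}{2}(\omega\cdot x-\tau)_{-}^{2}\bigr)=\delta(\tau-\omega\cdot x)$. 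On the right the three derivatives land on $\xi_{\omega}$, producing precisely $\varphi_{\omega}\equiv\partial_{t}^{3}\xi_{\omega}$ and the representation (\ref{E:single-delta}).

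Third, the two remaining assertions follow at once. Uniqueness of $\varphi_{\omega}$ is inherited from the uniqueness clause of Proposition~\ref{T:exist} together with the defining relation (\ref{E:third-der}). For the support statement (\ref{E:small_support}), the set $W$ is relatively open in $t$ for each fixed $y\in\Gamma$, so distributional $t$-differentiation preserves the vanishing of $\xi_{\omega}$ there, whence $\varphi_{\omega}=0$ on $W\cap(\mathbb{R}\times\Gamma)$.

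The step I expect to be most delicate is the rigorous interchange of $\partial_{\tau}^{3}$ with the double integral in (\ref{E:work}), given that $\xi_{\omega}$ is only an $H^{-1/2}(\Gamma)$-valued tempered distribution in $t$. I would handle this by treating the single-layer as a retarded convolution in the time variable between $\Phi_{n}$ and $\xi_{\omega}$ and appealing to the distributional calculus of \cite{Sayas}, under which $\partial_{t}$ commutes with such convolutions; the moving lower limit $\tau-T_{1}(\omega)$ is then made precise by the causal vanishing of $\Phi_{n}$ and the compact support of $\xi_{\omega}$ in $t$.
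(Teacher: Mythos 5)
Your proposal is correct and follows essentially the same route as the paper: apply Proposition~\ref{T:exist} to the profile $\frac{1}{2}(\omega\cdot x-t)_{-}^{2}$, rewrite the single layer potential as a time convolution using causality of $\Phi_{n}$, differentiate three times in $\tau$ so that $\partial_{\tau}^{3}\bigl(\tfrac{1}{2}(\omega\cdot x-\tau)_{-}^{2}\bigr)=\delta(\tau-\omega\cdot x)$ while the derivatives fall on the density, and inherit uniqueness and the support condition from $\xi_{\omega}$ since distributional differentiation in $t$ does not enlarge the support. This matches equations (\ref{E:work})--(\ref{E:small_support}) in the paper step for step, including your identification of the interchange of $\partial_{\tau}^{3}$ with the retarded convolution as the point requiring the distributional calculus of \cite{Sayas}.
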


The support of $\varphi_{\omega}(t,y)$ is shown in Figure \ref{F:support}(b).

\section{Recovering Radon projections from thermoacoustic
data\label{S:Recovery}}

Our approach to the inverse source problem of TAT/PAT consists of finding the
standard Radon projections $\mathcal{R}f(\omega,\tau)$ of the source $f(x)$
through the use of single layer potential(s) for the wave equation. The latter
projections are defined as follows%
\[
\mathcal{R}f(\tau,\omega)\equiv\int\limits_{\Omega_{0}}f(x)\delta(\tau
-\omega\cdot x)dx,
\]
where $\omega\in\mathbb{S}^{n-1}$ is a unit vector. Since explicit formulas
and efficient algorithms for inverting the Radon transform are well known
\cite{Natt4}, recovering $\mathcal{R}f(\omega,\tau)$ is equivalent to finding
$f(x).$

For convenience, let us extend $G(t,y)$ to $\mathbb{R}\times\Gamma$ by zero%
\[
G(t,y)\equiv0,\quad t\in(-\infty,0),\quad y\in\Gamma.
\]
We assume, for simplicity, that $f(x)\in C_{0}^{\infty}(\Omega_{0}),$ making
$G(t,y)$ an infinitely differentiable function of $t.$ Let us multiply
$G(t,y)$ by $\varphi_{\omega}(\tau-t,y)$ (as given by Proposition
\ref{T:exist-Heavi}), and integrate over $(0,\tau-T_{0}(\omega)]\times\Gamma$:%
\begin{align}
\int\limits_{0}^{\tau-T_{0}(\omega)}\int\limits_{\Gamma}G(t,y)\varphi_{\omega
}(\tau-t,y)dydt  &  =\int\limits_{0}^{\tau-T_{0}(\omega)}\int\limits_{\Gamma
}\left[  \int\limits_{\Omega_{0}}f(x)\Phi_{n}(t,x-y)dx\right]  \varphi
_{\omega}(\tau-t,y)dydt\label{E:findingJ1}\\
&  =\int\limits_{\Omega_{0}}f(x)\left[  \int\limits_{T_{0}(\omega)}^{\tau}%
\int\limits_{\Gamma}\Phi_{n}(\tau-s,x-y)\varphi_{\omega}(s,y)dyds\right]
dx=\nonumber
\end{align}%
\begin{equation}
=\int\limits_{\Omega_{0}}f(x)\delta(-\omega\cdot x+\tau)dx=\mathcal{R}%
f(\tau,\omega),\quad\omega\in\mathbb{S}^{n-1},\quad\tau\in\mathcal{T}(\omega).
\label{E:findingJ}%
\end{equation}
Thus, we have proven

\begin{theorem}
\label{T:rec-form}Let $f\in C_{0}^{\infty}(\Omega_{0}),$ data $G(t,y)$ be
defined by (\ref{E:Gr-conv}), and distribution $\varphi_{\omega}(t,y)$ be
given by Proposition \ref{T:exist-Heavi}. Then, for any $\omega\in
\mathbb{S}^{n-1}$ and $\tau\in\mathcal{T}(\omega),$ Radon projections
$\mathcal{R}f(\tau,\omega)$ can be reconstructed from $G(t,y)$ by the formula%
\begin{equation}
\mathcal{R}f(\tau,\omega)=\int\limits_{0}^{\tau-T_{0}(\omega)}\int%
\limits_{\Gamma}G(t,y)\varphi_{\omega}(\tau-t,y)dydt. \label{E:good-rec}%
\end{equation}

\end{theorem}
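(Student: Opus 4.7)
The plan is to verify (\ref{E:good-rec}) by direct computation, with Proposition \ref{T:exist-Heavi} as the single decisive input. Starting from the right-hand side of (\ref{E:good-rec}), I would substitute the definition of $G(t,y)$ from (\ref{E:Gr-conv}), obtaining a triple integral involving $f(x)$, $\Phi_n(t,x-y)$, and $\varphi_\omega(\tau-t,y)$ over the domain $\Omega_0 \times \Gamma \times [0,\tau-T_0(\omega)]$.

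Next I would interchange the order of integration to move the $x$-integration to the outside, and then apply the change of variables $s = \tau - t$ in the inner integral. The inner double integral becomes
\[
\int_{T_0(\omega)}^{\tau} \int_\Gamma \Phi_n(\tau - s, x-y)\, \varphi_\omega(s,y)\, dy\, ds,
\]
which by Proposition \ref{T:exist-Heavi} (equation (\ref{E:single-delta})) equals $\delta(\tau - \omega \cdot x)$ for every $x \in \Omega^-$ (in particular for $x \in \Omega_0 \subset \Omega^-$) and every $\tau \in \mathcal{T}(\omega)$. Substituting back gives $\int_{\Omega_0} f(x)\,\delta(\tau - \omega \cdot x)\,dx$, which is $\mathcal{R}f(\tau,\omega)$ by definition.

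Two minor bookkeeping issues require care. First, the integration ranges $[0,\tau-T_0(\omega)]$ in the statement and $[T_0(\omega),\tau]$ produced by the substitution must be reconciled: since $\varphi_\omega(t,\cdot)$ is supported in $\mathcal{T}(\omega) \subset [-R,R]$ while $G(t,y)$ vanishes for $t<0$ by the extension just before the theorem, the two ranges contribute identically once $\tau \in \mathcal{T}(\omega)$, so they can be used interchangeably. Second, the causality property (\ref{E:causal}) guarantees that $\Phi_n(\tau-t,x-y)$ vanishes for $t > \tau$, so extending or contracting the outer $t$-interval within the support bounds does not change the value of the integral.

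The only genuine obstacle is the rigorous justification of the order-of-integration interchange, because $\varphi_\omega(t,\cdot)$ is a distribution with values in $H^{-1/2}(\Gamma)$ and $\Phi_n$ is itself singular. The hypothesis $f \in C_0^{\infty}(\Omega_0)$ is what makes this work: $G(t,y)$ becomes a smooth function of $t$ on $\Gamma$, and the quantity in brackets in (\ref{E:findingJ1}) should be interpreted as the duality pairing of the distribution $\varphi_\omega$ against the smooth, compactly supported (in $t$) test section $(t,y) \mapsto \Phi_n(\cdot,x-\cdot) \ast G$-type object. A short approximation argument --- for instance, mollifying $\varphi_\omega$ in $t$, carrying out the classical Fubini interchange on the smooth approximants, and passing to the limit in the $H^{-1/2}(\Gamma)$ pairing --- upgrades the formal manipulation to a rigorous identity and completes the proof.
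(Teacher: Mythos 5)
Your proposal is correct and follows essentially the same route as the paper's own proof: substitute the definition of $G$ from (\ref{E:Gr-conv}), interchange the order of integration, change variables $s=\tau-t$, and invoke Proposition \ref{T:exist-Heavi} (equation (\ref{E:single-delta})) to collapse the inner integral to $\delta(\tau-\omega\cdot x)$, yielding $\mathcal{R}f(\tau,\omega)$. Your additional remarks on reconciling the integration ranges and on justifying the interchange via mollification are sound elaborations of steps the paper carries out implicitly.
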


We note that for $\tau\notin\mathcal{T}(\omega)$, projections $\mathcal{R}%
f(\tau,\omega)$ vanish. Therefore, formula (\ref{E:good-rec}) recovers all
Radon projections $\mathcal{R}f(\tau,\omega)$ for $\omega\in\mathbb{S}^{n-1}$
and $\tau\in\mathbb{R}$ from the full set of data $G(t,y)$ defined on
$[0,\mathrm{diam}(\Omega^{-})]\times\Gamma.$

Let us extend $\varphi_{\omega}(t,y)$ by zero outside of the interval
$\mathcal{T}(\omega)$ in $t.$ Then, since $G(t,y)$ vanishes for $t<0,$ the
integral in $t$ in the left hand side of (\ref{E:findingJ1}) can be extended
to all of $\mathbb{R}$, making it a standard convolution in time:%
\begin{equation}
\mathcal{R}f(\tau,\omega)=\int\limits_{\mathbb{R}}\int\limits_{\Gamma
}G(t,y)\varphi_{\omega}(\tau-t,y)dydt,\quad\omega\in\mathbb{S}^{n-1},\quad
\tau\in\mathcal{T}(\omega). \label{E:convolution}%
\end{equation}
By differentiating the above equation in $\tau$ and integrating by parts one
obtains an expression for the derivative of the Radon projections in $\tau$ in
terms of $g(t,y).$

\begin{corollary}
\label{T:corr-g}Under the conditions of theorem \ref{T:rec-form} the following
formula holds for any $\omega\in\mathbb{S}^{n-1}$ and $\tau\in\mathcal{T}%
(\omega)$:%
\begin{equation}
\frac{\partial}{\partial\tau}\mathcal{R}f(\tau,\omega)=\int\limits_{0}%
^{\tau-T_{0}(\omega)}\int\limits_{\Gamma}g(t,y)\varphi_{\omega}(\tau
-t,y)dydt,\quad\omega\in\mathbb{S}^{n-1},\quad\tau\in\mathcal{T}(\omega).
\label{E:good-rec-g}%
\end{equation}

\end{corollary}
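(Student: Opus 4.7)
The plan is to differentiate the convolutional identity (\ref{E:convolution}) established just before the corollary, namely
\[
\mathcal{R}f(\tau,\omega) = \int_{\mathbb{R}}\int_{\Gamma} G(t,y)\,\varphi_{\omega}(\tau-t,y)\,dy\,dt,
\]
with respect to $\tau$, and then to shift the $\tau$-derivative onto $G$ by an integration by parts in $t$, using the identity $\partial_{\tau}\varphi_{\omega}(\tau-t,y) = -\partial_{t}\varphi_{\omega}(\tau-t,y)$ and the relation $g = \partial_{t}G$ from (\ref{E:Gr-conv}).

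First I would note that, since $f\in C_{0}^{\infty}(\Omega_{0})$, the function $t\mapsto G(t,y)$ is smooth; moreover $G(t,y)=0$ for $t\leq0$ (by the convention extending $G$ to the negative half-line, together with the causality relation (\ref{E:causal}) which forces $G(0,y)=0$ for $y\in\Gamma$). The density $\varphi_{\omega}(\cdot,y)$ is a tempered distribution with compact $t$-support contained in $\mathcal{T}(\omega)\subset[-R,R]$, so for fixed $\tau$ the $t$-support of $\varphi_{\omega}(\tau-t,y)$ is the bounded interval $[\tau-T_{1}(\omega),\tau-T_{0}(\omega)]$. Differentiating in $\tau$ passes under the $t$-integral and yields
\[
\frac{\partial}{\partial\tau}\mathcal{R}f(\tau,\omega) = \int_{\mathbb{R}}\int_{\Gamma} G(t,y)\,\frac{\partial}{\partial\tau}\varphi_{\omega}(\tau-t,y)\,dy\,dt = -\int_{\mathbb{R}}\int_{\Gamma} G(t,y)\,\frac{\partial}{\partial t}\varphi_{\omega}(\tau-t,y)\,dy\,dt.
\]
Integrating by parts in $t$, the boundary terms vanish: at any sufficiently large positive $t$ the factor $\varphi_{\omega}(\tau-t,y)$ is supported away from $t$, while at $t=0$ (and for all $t\leq 0$) the factor $G(t,y)$ vanishes. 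Hence the boundary contributions are zero and one obtains
\[
\frac{\partial}{\partial\tau}\mathcal{R}f(\tau,\omega) = \int_{\mathbb{R}}\int_{\Gamma}\frac{\partial G}{\partial t}(t,y)\,\varphi_{\omega}(\tau-t,y)\,dy\,dt = \int_{\mathbb{R}}\int_{\Gamma} g(t,y)\,\varphi_{\omega}(\tau-t,y)\,dy\,dt.
\]

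Finally, I would truncate the $t$-integration to the stated interval $[0,\tau-T_{0}(\omega)]$. Below $t=0$ the integrand vanishes because $g(t,y)=0$; above $t=\tau-T_{0}(\omega)$ it vanishes because $\varphi_{\omega}(\tau-t,y)$ sits outside its support in $t$; the upper boundary $\tau-T_{1}(\omega)$ on the other side is negative whenever $\tau\in\mathcal{T}(\omega)$, so it is absorbed into the lower cutoff $t=0$. This yields the claimed formula (\ref{E:good-rec-g}).

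The only real subtlety is justifying the integration by parts, since $\varphi_{\omega}$ is only a distribution in $t$ (it is itself a third $t$-derivative of $\xi_{\omega}$ by (\ref{E:third-der}), with values in $H^{-1/2}(\Gamma)$). However, the manipulations are to be interpreted in the duality pairing between $\varphi_{\omega}$ and the smooth compactly-supported-in-$t$ test function $G(\cdot,y)$ (the spatial pairing with $H^{1/2}(\Gamma)$ being a parameter throughout), so the transfer of the $t$-derivative from $\varphi_{\omega}$ onto $G$ is precisely the definition of the distributional derivative, and no genuine boundary terms arise.
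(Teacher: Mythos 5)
Your proposal is correct and takes essentially the same route as the paper: the authors obtain the corollary precisely by differentiating the convolution identity (\ref{E:convolution}) in $\tau$ and integrating by parts in $t$ to transfer the derivative onto $G$, producing $g=\partial_t G$. You simply make explicit the details the paper leaves implicit (vanishing of the boundary terms via the compact $t$-support of $\varphi_{\omega}$ and the vanishing of $G$ for $t\leq 0$, the truncation of the integration limits, and the distributional interpretation of the integration by parts).
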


Due to the bounded support of $t \mapsto\varphi_{\omega}(t,y)$ (see
Proposition \ref{T:exist}) and the variable upper limit in the outer integrals
in (\ref{E:good-rec}) and (\ref{E:good-rec-g}) we obtain the following obvious
but important

\begin{remark}
\label{T:corr}Let $f\in C_{0}^{\infty}(\Omega_{0}),$ data $G(t,y)$ (or
$g(t,y)$) be defined by (\ref{E:Gr-conv}), and distribution $\varphi_{\omega
}(t,y)$ be given by Proposition \ref{T:exist-Heavi}. Suppose that instead of
data $G(t,y)$ we are given corrupted data $\widetilde{G}(t,y)$, such that for
some $T_{\mathrm{good}}<\mathrm{diam}(\Omega^{-})$, $\widetilde{G}(t,y)$
\ coincides with $G(t,y)$ on $(0,T_{\mathrm{good}}]\times\Gamma$ and differs
for larger values of $t.$ Then, for any $\omega\in\mathbb{S}^{n-1},$ Radon
projections $\mathcal{R}f(\tau,\omega)$ for values of $\tau$ in the interval
$(T_{0}(\omega),T_{0}(\omega)+T_{\mathrm{good}}],$ are reconstructed from
$\widetilde{G}(t,y)$ by the formula (\ref{E:good-rec}) with $G$ replaced
by~$\tilde{G}$. Similarly, the derivatives $\frac{\partial}{\partial\tau
}\mathcal{R}f(\tau,\omega)$ are reconstructed from $\widetilde{g}%
(t,y)\equiv\frac{\partial}{\partial\tau}\widetilde{G}(t,y)$ using equation
(\ref{E:good-rec-g}) with $g$ replaced by $\tilde{g}.$
\end{remark}

Formula (\ref{E:good-rec-g}) is more practical since usually $g(t,y)$ (and not
$G(t,y))$ is measured.

\subsection{Reconstruction from temporally reduced data}

Proposition \ref{T:rec-form} and Remark \ref{T:corr} also permit us to recover
the full set of projections $\mathcal{R}f(\tau,\omega)$ from temporally
reduced data. Indeed,\ due to the well known symmetry of the Radon
projections
\begin{equation}
\mathcal{R}f(\tau,\omega)=\mathcal{R}f(-\tau,-\omega), \label{E:Radon-symm}%
\end{equation}
equation (\ref{E:good-rec}) is over-determined. This allows for a temporal
reduction of data. Define $T_{\mathrm{med}}(\omega)$ as the middle of the
interval $\mathcal{T}(\omega)$ (i.e. $T_{\mathrm{med}}(\omega)\equiv
(T_{0}(\omega)+T_{1}(\omega))/2$) and notice that
\begin{equation}
T_{\mathrm{med}}(\omega)-T_{0}(\omega)\leq\frac{1}{2}\mathrm{diam}(\Omega
^{-}), \qquad T_{1}(\omega)-T_{\mathrm{med}}(\omega)\leq\frac{1}%
{2}\mathrm{diam}(\Omega^{-}). \label{E:silly-ineq}%
\end{equation}

\begin{theorem}
\label{T:reduced}\textbf{Reconstruction from temporally reduced data.} Let
$f\in C_{0}^{\infty}(\Omega_{0}),$ data $G(t,y)$ be defined by
(\ref{E:Gr-conv}), and distribution $\varphi_{\omega}(t,y)$ be given by
Proposition \ref{T:exist-Heavi}. Then, Radon projections $\mathcal{R}%
f(\tau,\omega)$ can be reconstructed from the data $G(t,y)$ known on $\left[
0,\frac{1}{2}\mathrm{diam}(\Omega^{-})\right]  \times\Gamma$ by the formulas%
\begin{align}
\mathcal{R}f(\tau,\omega)  &  =\int\limits_{0}^{\tau-T_{0}(\omega)}%
\int\limits_{\Gamma}G(t,y)\varphi_{\omega}(\tau-t,y)dydt,\qquad\tau\in
(T_{0}(\omega),T_{\mathrm{med}}(\omega)],\qquad\omega\in\mathbb{S}%
^{n-1},\label{E:thm1a}\\
\mathcal{R}f(\tau,\omega)  &  =\mathcal{R}f(-\tau,-\omega),\qquad\tau
\in(T_{\mathrm{med}}(\omega),T_{1}(\omega)),\qquad\omega\in\mathbb{S}^{n-1}.
\label{E:thm1b}%
\end{align}

\end{theorem}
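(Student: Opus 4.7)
The proof is essentially a direct consequence of Theorem~\ref{T:rec-form} combined with the Radon symmetry (\ref{E:Radon-symm}); the real content is the careful bookkeeping that the outer $t$-integral in (\ref{E:good-rec}) never reaches beyond $\tfrac{1}{2}\mathrm{diam}(\Omega^{-})$. I would organize the argument in two cases split at $T_{\mathrm{med}}(\omega)$.

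\textbf{Case 1: $\tau\in(T_{0}(\omega),T_{\mathrm{med}}(\omega)]$.} Here I would simply invoke Theorem~\ref{T:rec-form} to obtain formula (\ref{E:thm1a}), and then verify that only the given slice of data is used. The outer integral in (\ref{E:good-rec}) runs over $t\in(0,\tau-T_{0}(\omega)]$, and by the left half of (\ref{E:silly-ineq}),
\[
\tau-T_{0}(\omega)\leq T_{\mathrm{med}}(\omega)-T_{0}(\omega)\leq\tfrac{1}{2}\mathrm{diam}(\Omega^{-}),
\]
so only the values of $G(t,y)$ on $[0,\tfrac{1}{2}\mathrm{diam}(\Omega^{-})]\times\Gamma$ enter the formula.

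\textbf{Case 2: $\tau\in(T_{\mathrm{med}}(\omega),T_{1}(\omega))$.} Here I would apply the symmetry (\ref{E:Radon-symm}) to write $\mathcal{R}f(\tau,\omega)=\mathcal{R}f(-\tau,-\omega)$ and then reduce to Case~1 with $(\tau,\omega)$ replaced by $(-\tau,-\omega)$. The key check is that $-\tau$ lies in the admissible sub-interval for the direction $-\omega$: using (\ref{E:endpoints}) one has $T_{0}(-\omega)=-T_{1}(\omega)$ and $T_{\mathrm{med}}(-\omega)=-T_{\mathrm{med}}(\omega)$, so $\tau\in(T_{\mathrm{med}}(\omega),T_{1}(\omega))$ translates exactly into $-\tau\in(T_{0}(-\omega),T_{\mathrm{med}}(-\omega))$. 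Thus Case~1 applied to $(-\tau,-\omega)$ yields $\mathcal{R}f(-\tau,-\omega)$ from $G(t,y)$ on $[0,\tfrac{1}{2}\mathrm{diam}(\Omega^{-})]\times\Gamma$, and combined with (\ref{E:Radon-symm}) this gives (\ref{E:thm1b}).

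The two cases cover the whole interval $\mathcal{T}(\omega)$ (the single missed point $\tau=T_{\mathrm{med}}(\omega)$ is irrelevant since both formulas agree there by continuity, and outside $\mathcal{T}(\omega)$ the projection vanishes). There is no genuine analytic obstacle here: the only thing to watch is the endpoint bookkeeping via (\ref{E:endpoints}) and the use of the right-hand inequality in (\ref{E:silly-ineq}) to ensure the interval $(T_{\mathrm{med}}(\omega),T_{1}(\omega))$ indeed gets mapped into an interval of length at most $\tfrac{1}{2}\mathrm{diam}(\Omega^{-})$ after reflection. Everything else is already packaged in Theorem~\ref{T:rec-form}.
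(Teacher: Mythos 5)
Your proposal is correct and follows essentially the same route as the paper's proof: bound the upper integration limit by $\tfrac{1}{2}\mathrm{diam}(\Omega^{-})$ via (\ref{E:silly-ineq}) for $\tau\le T_{\mathrm{med}}(\omega)$, then use (\ref{E:endpoints}) and (\ref{E:Radon-symm}) to map $\tau\in(T_{\mathrm{med}}(\omega),T_{1}(\omega))$ to $-\tau\in(T_{0}(-\omega),T_{\mathrm{med}}(-\omega))$ and reduce to the first case. The only addition is your explicit check that $T_{\mathrm{med}}(-\omega)=-T_{\mathrm{med}}(\omega)$, which the paper leaves implicit.
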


\begin{proof}
Due to (\ref{E:silly-ineq}), for values of $\tau$ in the interval
$(T_{0}(\omega),T_{\mathrm{med}}(\omega)]$ the upper integration limit in $t$
in (\ref{E:thm1a}) never exceeds $\frac{1}{2}\mathrm{diam}(\Omega^{-})$.
Moreover, suppose $\tau\in(T_{\mathrm{med}}(\omega),T_{1}(\omega))$ as
required by formula~(\ref{E:thm1b}). Then, due to (\ref{E:endpoints}), the
value of $-\tau$ lies in the interval $(-T_{1}(\omega),-T_{\mathrm{med}%
}(\omega))=(T_{0}(-\omega),T_{\mathrm{med}}(-\omega)),$ and $\mathcal{R}%
f(-\omega,-\tau)$ can be computed using formula (\ref{E:thm1a}) with required
values of $G(t,y)$ confined to $t\in\left[  0,\frac{1}{2}\mathrm{diam}%
(\Omega^{-})\right]  $.
\end{proof}

Alternatively, the derivatives $\frac{\partial}{\partial\tau}\mathcal{R}%
f(\tau,\omega)$ can be reconstructed from $g(t,y)$ using the formula
\[
\frac{\partial}{\partial\tau}\mathcal{R}f(\tau,\omega)=\int\limits_{0}%
^{\tau-T_{0}(\omega)}\int\limits_{\Gamma}g(t,y)\varphi_{\omega}(\tau
-t,y)dydt,\qquad\tau\in(T_{0}(\omega),T_{\mathrm{med}}(\omega)],\qquad
\omega\in\mathbb{S}^{n-1}.
\]
with subsequent anti-differentiation in $\tau$ to obtain $\mathcal{R}%
f(\tau,\omega)$ on $\tau\in(T_{0}(\omega),T_{\mathrm{med}}(\omega)],$ and the
use of (\ref{E:thm1b}) to recover remaining values of $\mathcal{R}f.$

\subsection{Reconstruction from spatially reduced data\label{S:spat-reduced}}

The relative sparsity of the support of density $\varphi_{\omega}(t,y)$ can be
used to recover exactly the Radon projections of $f(x)$ from data supported on
$S\subseteq\Gamma,$ provided that support $\Omega_{0}$ of $f(x)$ lies on a
certain distance from $\Gamma\backslash S.$ Indeed, consider the distorted
representation $\delta^{\mathrm{distort}}(\tau,x)$ of the delta wave
$\delta(\tau-\omega\cdot x)$ arising if in equation (\ref{E:single-delta})
integration over $\Gamma$ is replaced by integration over $S$:%
\[
\delta^{\mathrm{distort}}(\tau,x)\equiv\int\limits_{T_{0}(\omega)}^{\tau}%
\int\limits_{S}\Phi_{n}(\tau-t,x-y)\varphi_{\omega}(t,y)dydt.
\]
Let us analyze the error $E(\tau,\omega,x)\equiv\delta(\tau-\omega\cdot
x)-\delta^{\mathrm{distort}}(\tau,x)$:%
\begin{equation}
E(\tau,\omega,x)=\int\limits_{T_{0}(\omega)}^{\tau}\int\limits_{\Gamma
\backslash S}\Phi_{n}(\tau-t,x-y)\varphi_{\omega}(t,y)dydt=\int\limits_{\Gamma
\backslash S}\int\limits_{\omega\cdot y}^{\tau}\Phi_{n}(\tau-t,x-y)\varphi
_{\omega}(t,y)dtdy, \label{E:def-error}%
\end{equation}
where we used (\ref{E:def-W}) and (\ref{E:small_support}). Due to
(\ref{E:small_support}) and to the finite speed of wave propagation, the
support of $E(\tau,\omega,x)$ is given by the formula
\begin{equation}
\text{support}(E(\tau,\omega,x))=\bigcup\limits_{y\in\Gamma\backslash
S,\quad\tau-\omega\cdot y>0}B(y,\tau-\omega\cdot y). \label{E:support}%
\end{equation}
In the above expression, $d(\tau,\omega,y)\equiv\tau-\omega\cdot y$ is a
signed distance from the point $y$ on $\Gamma\backslash S$ to the front
$L(\tau,\omega).$ This distance is positive for points that have been already
passed by the front; only these points contribute to (\ref{E:support}). If,
for a given $\Gamma$, $S$, and $\Omega_{0}$, parameters $\tau$ and $\omega$
are such that support$(E(\tau,\omega,x))$ does not intersect $\Omega_{0},$
then $\delta^{\mathrm{distort}}(\tau,x)$ can be used instead of $\delta
(\tau-\omega\cdot x)$ in the equation (\ref{E:findingJ}) without changing the
result. For such values of $\tau$ the Radon integral $\mathcal{R}f(\omega
,\tau)$ (or its derivative $\frac{\partial}{\partial\tau}\mathcal{R}%
f(\tau,\omega))$ are exactly reconstructed by the formulas%
\begin{align}
\mathcal{R}f(\tau,\omega)  &  =\int\limits_{0}^{\tau-T_{0}(\omega)}%
\int\limits_{S}G(t,y)\varphi_{\omega}(\tau-t,y)dydt,\label{E:reduced}\\
\frac{\partial}{\partial\tau}\mathcal{R}f(\tau,\omega)  &  =\int%
\limits_{0}^{\tau-T_{0}(\omega)}\int\limits_{S}g(t,y)\varphi_{\omega}%
(\tau-t,y)dydt. \label{E:reduced-g}%
\end{align}

Below we present several geometries where the set of pairs $(\tau,\omega)$ for
which (\ref{E:reduced}), (\ref{E:reduced-g}) hold is rich enough to
reconstruct, with the help of (\ref{E:Radon-symm}), the full set of the Radon
projections $\mathcal{R}f(\tau,\omega).$

Consider a 2D acquisition \textbf{geometry \#1} shown in
Figure~\ref{F:geometries}(a). The part of the boundary $\Gamma\backslash S$ is
given by a function $x_{2}=\gamma(x_{1})$ defined on the interval
$[-\alpha,\alpha].$ The endpoints of $\Gamma\backslash S$ are $a$ and $b$ with
coordinates $a=(-\alpha,\beta),$ $b=(\alpha,\beta).$ We assume additionally
that $\gamma(x_{1})\geq\beta$ on $[-\alpha,\alpha],$ and that the derivative
$\gamma^{\prime}(x_{1})$ is bounded. The open set $\Omega_{0}$ serving as the
support of $f(x)$ is bounded in $x_{2}$ by a line $x_{2}=\beta-\alpha.$ This
implies that the distance from $\Gamma\backslash S$ to any point in
$\Omega_{0}$ is strictly greater than $\alpha$. \begin{figure}[t]
\begin{center}%
\begin{tabular}
[c]{ccc}%
\includegraphics[scale = 0.8]{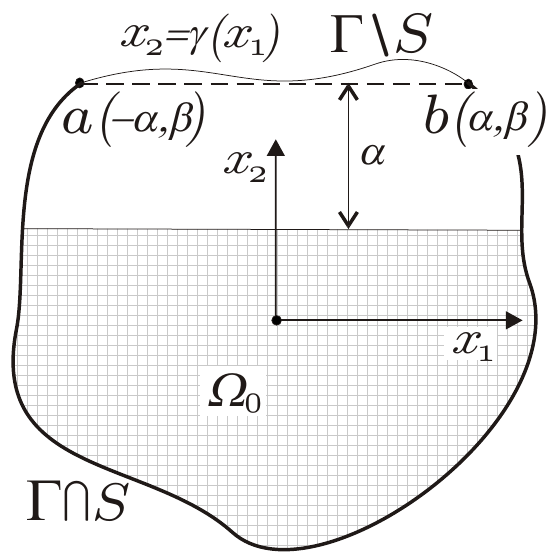}\phantom{a} & \phantom{a}
\includegraphics[scale = 0.8]{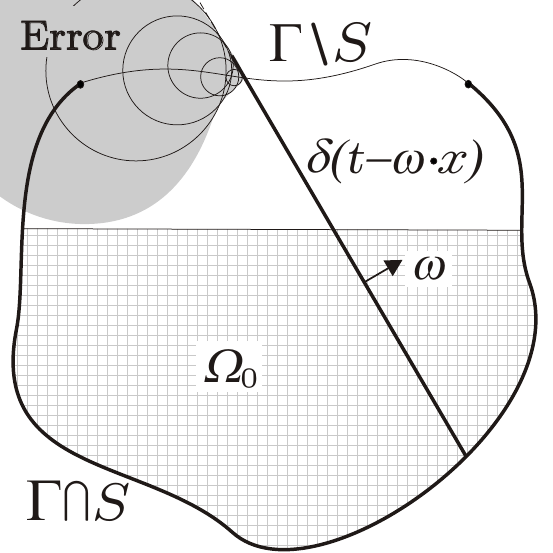}\phantom{a} & \phantom{a}
\includegraphics[scale = 0.8]{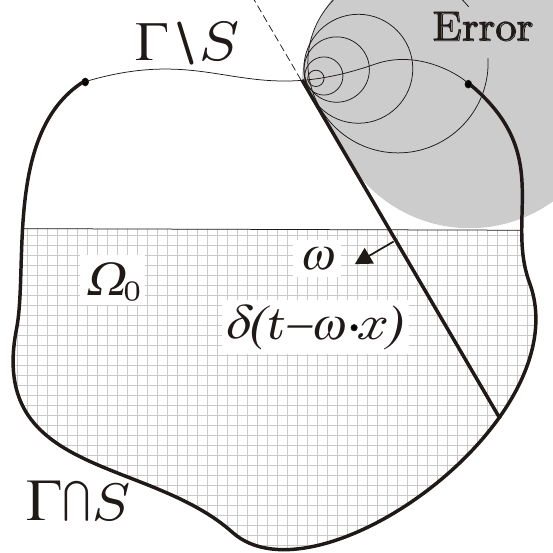} \phantom{a2}\\
(a) & (b) & (c)
\end{tabular}
\end{center}
\par
\vspace{-3mm} \caption{(a) acquisition geometry \#1; (b),(c) support of the
error $E(\tau,\omega,x)$ (shown as gray areas) in the representation of
$\delta(t-\omega\cdot x)$ for different directions $\omega$. }%
\label{F:geometries}%
\end{figure}

\begin{proposition}
\label{T:intervals}For the acquisition \textbf{geometry \#1}, formula
(\ref{E:reduced}) holds for all $\omega\equiv(\omega_{1},\omega_{2}%
)\neq(0,-1)$ with $\tau$ lying within the following intervals:%
\begin{equation}
\tau\in\left\{
\begin{array}
[c]{cc}%
(T_{0}(\omega),\omega\cdot a+\alpha], & \omega_{1}\geq0,\\
(T_{0}(\omega),\omega\cdot b+\alpha], & \omega_{1}<0.
\end{array}
\right.  \label{E:intervals}%
\end{equation}

\end{proposition}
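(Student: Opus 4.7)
My plan is to prove (\ref{E:reduced}) by verifying, for the stated range of $\tau$, that the support of the error $E(\tau,\omega,\cdot)$ given by (\ref{E:support}) is disjoint from $\Omega_0$. Once this disjointness is secured, the argument preceding (\ref{E:reduced}) applies: $\delta^{\mathrm{distort}}(\tau,x)$ may replace $\delta(\tau-\omega\cdot x)$ inside the integral against $f$ leading to (\ref{E:findingJ}), so the identity (\ref{E:reduced}) follows from Theorem~\ref{T:rec-form}.

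I would parameterize $\Gamma\setminus S$ as $y=(y_1,\gamma(y_1))$ with $y_1\in[-\alpha,\alpha]$. The disjointness reduces to the pointwise condition $\tau-\omega\cdot y\leq\mathrm{dist}(y,\Omega_0)$ for every $y\in\Gamma\setminus S$, since if $\tau-\omega\cdot y\leq 0$ the corresponding ball makes no contribution in (\ref{E:support}). Using the hypotheses $\Omega_0\subset\{x_2\leq\beta-\alpha\}$ and $\gamma\geq\beta$, the second coordinate alone yields the vertical bound $\mathrm{dist}(y,\Omega_0)\geq\gamma(y_1)-\beta+\alpha$. Substituting and rearranging, the requirement takes the form
\[
\tau\;\leq\;\omega_1 y_1+(1+\omega_2)\gamma(y_1)+\alpha-\beta\qquad\text{for every }y_1\in[-\alpha,\alpha].
\]

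To finish, I would minimise the right-hand side in $y_1$. The assumption $\omega\neq(0,-1)$ combined with $|\omega|=1$ forces $1+\omega_2>0$; together with $\gamma(y_1)\geq\beta$ this produces the further linear-in-$y_1$ lower bound $\omega_1 y_1+\omega_2\beta+\alpha$. Its minimum on $[-\alpha,\alpha]$ is attained at $y_1=-\alpha$ when $\omega_1\geq 0$, giving $\omega\cdot a+\alpha$, and at $y_1=\alpha$ when $\omega_1<0$, giving $\omega\cdot b+\alpha$. Since $\gamma(\pm\alpha)=\beta$, both successive lower bounds are saturated simultaneously at these endpoints, so the minimum of the original right-hand side equals precisely the right endpoint of the interval claimed in (\ref{E:intervals}).

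The main delicacy is that $\gamma$ is otherwise unrestricted on the open interval (beyond $\gamma\geq\beta$ and having bounded derivative), so no direct calculus optimisation of $\omega_1 y_1+(1+\omega_2)\gamma(y_1)$ is available; the device of replacing $(1+\omega_2)\gamma(y_1)$ by its infimum $(1+\omega_2)\beta$ requires precisely the sign hypothesis $1+\omega_2\geq 0$, which is exactly what breaks down at the excluded direction $\omega=(0,-1)$. A smaller book-keeping point is whether the right endpoint of the $\tau$-interval should be strict; this is resolved by noting that $\Omega_0$ is open while $\mathrm{support}(E(\tau,\omega,\cdot))$ is closed, so equality in the distance bound still leaves the two sets disjoint and justifies the half-open interval in (\ref{E:intervals}).
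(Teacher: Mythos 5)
Your proof is correct and follows essentially the same route as the paper: both reduce (\ref{E:reduced}) to checking that $\tau-\omega\cdot y\le\mathrm{dist}(y,\Omega_{0})$ for every $y\in\Gamma\backslash S$, via the vertical bound $\mathrm{dist}(y,\Omega_{0})\ge\gamma(y_{1})-\beta+\alpha$ together with the sign facts $1+\omega_{2}\ge0$ and $\gamma\ge\beta$. The only real difference is presentational: the paper splits into three cases in the signs of $\omega_{1},\omega_{2}$, while your single computation --- which in substance is the paper's Case~2 estimate $(y-a)\cdot(\omega+e_{2})\ge0$ written out in coordinates --- covers all directions at once (and, as a minor aside, your closing remark is slightly off: the step $(1+\omega_{2})\gamma\ge(1+\omega_{2})\beta$ does not actually fail at $\omega=(0,-1)$, it merely degenerates to an equality, so that direction is excluded for reasons outside this proposition).
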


\begin{proof}
\noindent\emph{Case 1}. Consider the Heaviside wave propagating in the
direction $\omega$ with $\omega_{1}\geq0,$ $\omega_{2}\geq0.$ For $\tau
\in(T_{0}(\omega),\omega\cdot a]$, support$(E(\tau,\omega,x))=\varnothing.$
For $\tau>\omega\cdot a,$ the largest $d(\tau,\omega,y)$ over points
$y\in\Gamma\backslash S$ equals to $\tau-\omega\cdot a$, since
\[
d(\tau,\omega,y)=\tau-\omega_{1}y_{1}-\omega_{2}\gamma(y_{1})\leq\tau
-\omega_{1}y_{1}-\omega_{2}\beta\leq\tau-\omega_{1}(-\alpha)-\omega_{2}%
\beta=d(\tau,\omega,a).
\]
Moreover, for $\tau\leq\omega\cdot a+\alpha,$ $d(\tau,\omega,a)\leq\alpha,$
and thus $d(\tau,\omega,y)\leq$ $\alpha$ for all $y$ in (\ref{E:support}).
Therefore, support$(E(\tau,\omega,x))$ does not intersect $\Omega_{0},$ and
formulas (\ref{E:reduced}), (\ref{E:reduced-g}) hold.

\noindent\emph{Case }2. Consider the wave with $\omega=(\omega_{1},\omega
_{2})$ with $\omega_{1}\geq0,$ $-1<\omega_{2}<0.$ Given $\tau\leq\omega\cdot
a+\alpha,$ with $e_{2}=(0,1)$ one obtains
\[
\mathrm{dist}(y,\Omega_{0})\geq\alpha+(y-a)\cdot e_{2}\geq\tau-\omega\cdot
a+(y-a)\cdot e_{2}=d(\tau,\omega,y)+(y-a)\cdot(\omega+e_{2}).
\]
Notice that vector $y-a$ lies in the first quadrant (both coordinates are
non-negative). The same is true for vector $\omega+e_{2}.$ Therefore,
$(y-a)\cdot(\omega+e_{2})\geq0$ and $d(\tau,\omega,y)\leq$dist(y,$\Omega
_{0}).$ Thus, support$(E(\tau,\omega,x))$ does not intersect $\Omega_{0},$ and
formulas (\ref{E:reduced}), (\ref{E:reduced-g}) hold.

\noindent\emph{Case 3}. When $\omega_{1}<0$, one can argue similarly to cases
1 and 2, with the vector $a$ replaced by $b.$
\end{proof}

With a little trigonometry, conditions (\ref{E:intervals}) can be represented
more succinctly. Express $b$ in the form $b=|b|(\sin\mu,\cos\mu);$ then
$\ \mu\in(0,\pi)$ is the\ angle between $e_{2}$ and $b$. Further,
$\alpha=|b|\sin\mu,$ $\beta=|b|\cos\mu.$ Now let us represent $(-\omega)$ as
$-\omega=(-\omega_{1},\cos\nu),$ with $\nu\in\lbrack0,\pi].$ Parameter $\nu$
is the (positive) angle between $e_{2}$ and $(-\omega)$. Then one can check
that (\ref{E:intervals}) is equivalent to
\begin{equation}
\tau\in(T_{0}(\omega),|a|(-\cos(\mu-\nu)+\sin\mu)],\qquad\nu\in(0,\pi].
\label{E:new-intervals}%
\end{equation}
As we will show shortly, the set of values $\mathcal{R}f(\tau,\omega)$ \ for
$(\tau,\omega)$ satisfying (\ref{E:new-intervals}) contains enough information
to reconstruct all values of $\mathcal{R}f(\tau,\omega)$. Moreover, this set
can be further reduced. For each $\omega\in\mathbb{S}^{1}$ (except
$\omega=(0,-1))$ consider the following intervals of values of $\tau$:%
\begin{equation}
\tau\in\left\{
\begin{array}
[c]{cc}%
(T_{0}(\omega),|a|(-\cos(\mu-\nu)+\sin\mu)], & \nu\in(0,\pi/2],\\
(T_{0}(\omega),|a|(-\cos(\mu+\nu)-\sin\mu)], & \nu\in\lbrack\pi/2,\pi].
\end{array}
\right.  \label{E:short}%
\end{equation}
Let us compare the upper bound of the second interval in (\ref{E:short}) with
that in (\ref{E:new-intervals}):%
\[
(-\cos(\mu-\nu)+\sin\mu)-(-\cos(\mu+\nu)-\sin\mu)=2(1-\sin\nu)\sin\mu
\geq0,\quad\forall\mu,\nu\in(0,\pi].
\]
Therefore, intervals in (\ref{E:short}) are contained in
(\ref{E:new-intervals}), or, equivalently, in (\ref{E:intervals}).

\begin{proposition}
\label{T:geom1} For the acquisition \textbf{geometry \#1}, formulas
(\ref{E:reduced}), (\ref{E:reduced-g}) hold for all $\omega\neq(0,-1)$ and
$\tau$ lying within the intervals (\ref{E:short}). Moreover, by finding
$\mathcal{R}f(\tau,\omega)$ for these values of~$\omega$ and~$\tau,$ one
obtains enough information to reconstruct $\mathcal{R}f(\tau,\omega)$ for all
values of $\omega\in\mathbb{S}^{1},$ $\tau\in\mathcal{T}(\omega),$
using~(\ref{E:Radon-symm}).
\end{proposition}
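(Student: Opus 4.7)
The first assertion is an immediate corollary of Proposition~\ref{T:intervals}: the first line of (\ref{E:short}) coincides with (\ref{E:new-intervals}) restricted to $\nu \in (0,\pi/2]$, while for $\nu \in [\pi/2,\pi]$ the already-displayed inequality $2(1-\sin\nu)\sin\mu \geq 0$ shows that the second-line upper endpoint in (\ref{E:short}) does not exceed that of (\ref{E:new-intervals}). Thus in both cases (\ref{E:short}) is contained in (\ref{E:intervals}), and no new work is needed for this part.

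The substantive content is the sufficiency claim. I would fix $\omega \neq (0,-1)$, denote by $J(\omega)$ the interval of $\tau$'s given by (\ref{E:short}), and observe that by (\ref{E:Radon-symm}) the knowledge of $\mathcal{R}f(\tau',-\omega)$ for $\tau' \in J(-\omega)$ is the same, via $\tau = -\tau'$, as the knowledge of $\mathcal{R}f(\tau,\omega)$ for $\tau \in -J(-\omega)$. The plan is to show that $J(\omega) \cup (-J(-\omega)) \supseteq \mathcal{T}(\omega)$. The bookkeeping is governed by the observation that the parameter $\nu$ associated with $-\omega$ is $\pi-\nu$, so that when $\nu \in (0,\pi/2]$ one should use the first line of (\ref{E:short}) for $\omega$ and the second line for $-\omega$, while for $\nu \in [\pi/2,\pi]$ the roles swap.

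I expect the whole argument to reduce to a single trigonometric matching of endpoints. Spelling out the first subcase: $J(\omega) = (T_0(\omega),\,|a|(-\cos(\mu-\nu)+\sin\mu)]$, and applying the second line of (\ref{E:short}) to $-\omega$ with angle $\pi-\nu$, using the identity $\cos(\mu+\pi-\nu) = -\cos(\mu-\nu)$ together with the endpoint relation $T_0(-\omega) = -T_1(\omega)$ from (\ref{E:endpoints}), produces $J(-\omega) = (-T_1(\omega),\,|a|(\cos(\mu-\nu)-\sin\mu)]$. Reflecting then gives $-J(-\omega) = [|a|(-\cos(\mu-\nu)+\sin\mu),\,T_1(\omega))$, which meets $J(\omega)$ precisely at the common endpoint, so that the union exhausts $(T_0(\omega),T_1(\omega)) = \mathcal{T}(\omega)$. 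The subcase $\nu \in [\pi/2,\pi]$ is the mirror image, with $\omega$ and $-\omega$ swapping roles.

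The main obstacle is really nothing more than this careful sign-and-case bookkeeping; no geometric ingredient beyond the endpoint matching is required. The two exceptional directions $\omega = (0,\pm 1)$ — for the first of which the direct formula fails and for the second the symmetry partner is excluded — form a null subset of $\mathbb{S}^1$, and since $\mathcal{R}f(\tau,\cdot)$ is smooth on $\mathbb{S}^1$ for $f \in C_0^\infty(\Omega_0)$, the missing values there are recovered as limits from neighboring directions.
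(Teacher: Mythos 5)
Your proof is correct and, for the main body of the argument, coincides with the paper's: the first assertion is exactly the containment of the intervals (\ref{E:short}) in (\ref{E:intervals}) already established in the text preceding the proposition, and the second is the same pairing of $\omega$ with $-\omega$ (with $\nu\mapsto\pi-\nu$), followed by the trigonometric matching of the endpoint $|a|(-\cos(\mu-\nu)+\sin\mu)$ under the reflection $\tau\mapsto-\tau$ and the relation $T_{0}(-\omega)=-T_{1}(\omega)$ from (\ref{E:endpoints}). The one place you genuinely diverge is the exceptional case $\nu=\pi$, i.e.\ $\omega=(0,1)$, whose symmetry partner $(0,-1)$ is the excluded direction: the paper disposes of it by observing that the second line of (\ref{E:short}) already reaches $\tau=|a|(\cos\mu-\sin\mu)$, and hyperplanes $\omega\cdot x=\tau$ with larger $\tau$ do not meet $\Omega_{0}$ (which lies below $x_{2}=\beta-\alpha$), so no values are actually missing; you instead invoke continuity of $\mathcal{R}f$ in $\omega$ and recover the exceptional directions as limits from neighboring ones. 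Your device is legitimate for $f\in C_{0}^{\infty}(\Omega_{0})$, though it trades an exact coverage statement for a limiting one, and the paper's observation is the sharper fact worth recording. Note also that you have the two exceptional directions swapped: the direct formula does hold at $\omega=(0,1)$ (it is $(0,-1)$ that is excluded in Proposition \ref{T:intervals}), while it is $(0,1)$ whose symmetry partner is unavailable; since you treat both by the same limiting argument, this slip does not affect your conclusion.
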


\begin{proof}
The first part of the statement follows from Proposition \ref{T:intervals}. To
prove the second part, consider two opposite directions of the wave $\omega$
and $\omega^{\prime}=-\omega.$ Then, there are (non-negative) angles $\nu$ and
$\nu^{\prime}$ such that $-\omega_{2}=\cos\nu,$ $-\omega_{2}^{\prime}=\cos
\nu^{\prime},$ with $\nu^{\prime}=\pi-\nu.$ Without loss of generality, assume
that $\nu\in(0,\pi/2];$ then $\nu^{\prime}\in\lbrack\pi/2,\pi).$ The first
line in (\ref{E:short}) allows one to reconstruct values of $\mathcal{R}%
f(\tau,\omega)$ for
\begin{equation}
\tau\in(T_{0}(\omega),|a|(-\cos(\mu-\nu)+\sin\mu)]. \label{E:int1}%
\end{equation}
The second line in (\ref{E:short}) yields values of $\mathcal{R}f(\tau
^{\prime},\omega^{\prime})$ for $\tau^{\prime}\in(T_{0}(\omega^{\prime
}),|a|(-\cos(\mu+\nu^{\prime})+\sin\mu)].$ Due to (\ref{E:Radon-symm}),
$\mathcal{R}f(\tau^{\prime},\omega^{\prime})=\mathcal{R}f(-\tau^{\prime
},-\omega^{\prime})=\mathcal{R}f(\tau,\omega)$ with
\begin{equation}
\tau\in(-|a|(-\cos(\mu+\nu^{\prime})+\sin\mu),-T_{0}(\omega^{\prime
}))=(|a|(-\cos(\mu-\nu)-\sin\mu),T_{1}(\omega)). \label{E:int2}%
\end{equation}
Combining the data obtained for intervals (\ref{E:int1}) and (\ref{E:int2})
yields $\mathcal{R}f(\tau,\omega)$ for all $\tau\in(T_{0}(\omega),T_{1}%
(\omega))\equiv\mathcal{T}(\omega).$ For the remaining case $\nu=\pi$ it is
enough to consider the second line in (\ref{E:short}), since lines with
$\tau\geq|a|(\cos(\mu-\sin\mu)$ do not intersect $\Omega_{0}.$
\end{proof}

As a particular case of \textbf{geometry \#1}, consider \textbf{open circular
geometry}, where $\Gamma$ is a unit circle centered at the origin. Then, for a
given parameter $\mu\in(0,\pi/2),$ $a=(-\sin\mu,\cos\mu)$ and $b=(\sin\mu
,\cos\mu);$ $\Gamma\backslash S$ is the upper arc of the circle between $a$
and $b.$ Then $\alpha=\sin\mu,$ and $\Omega_{0}$ is the part of the open unit
disk lying under the line $x_{2}=\cos\mu-\sin\mu.$ As before, $-\omega
=(-\omega_{1},\cos\nu),$ with $\nu\in\lbrack0,\pi].$

\begin{corollary}
\label{T:geomcirc}For the \textbf{open circular geometry}, formulas
(\ref{E:reduced}), (\ref{E:reduced-g}) hold for all $\omega\neq(0,-1)$ and
$\tau$ lying within the intervals%
\begin{equation}
\tau\in\left\{
\begin{array}
[c]{cc}%
(-1,-\cos(\mu-\nu)+\sin\mu)], & \nu\in(0,\pi/2],\\
(-1,-\cos(\mu+\nu)-\sin\mu)], & \nu\in\lbrack\pi/2,\pi].
\end{array}
\right.  \label{E:circle-int}%
\end{equation}
Values of $\mathcal{R}f(\tau,\omega)$ for all values of $\omega\in
\mathbb{S}^{1},$ $\tau\in(-1,1),$ can be obtained using~(\ref{E:Radon-symm}).
\end{corollary}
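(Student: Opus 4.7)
The plan is to recognize that the open circular geometry is a direct specialization of geometry \#1, after which Corollary~\ref{T:geomcirc} reduces to substituting the specific constants into Proposition~\ref{T:geom1}. My main task is therefore to verify the hypotheses of geometry~\#1 and to read off the values of $|a|$ and $T_0(\omega)$.

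First I would check the geometric hypotheses. Parameterize the open upper arc $\Gamma\backslash S$ as the graph $x_2=\gamma(x_1)\equiv\sqrt{1-x_1^2}$ on $[-\sin\mu,\sin\mu]$, so that $\alpha=\sin\mu$ and $\beta=\cos\mu$. The inequality $\gamma(x_1)\geq\beta$ is equivalent to $x_1^2\leq\sin^2\mu$ and is therefore automatic on this interval. The derivative $\gamma'(x_1)=-x_1/\sqrt{1-x_1^2}$ attains its extremal values $\mp\tan\mu$ at $x_1=\pm\sin\mu$ and is bounded thanks to the hypothesis $\mu<\pi/2$, which keeps $\cos\mu$ bounded away from zero. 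Finally, the description of $\Omega_0$ as the part of the open unit disk lying below the line $x_2=\cos\mu-\sin\mu=\beta-\alpha$ coincides exactly with the bound imposed on $\Omega_0$ in geometry~\#1.

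Next I would identify the constants appearing in (\ref{E:short}). Since $a=(-\sin\mu,\cos\mu)$ and $b=(\sin\mu,\cos\mu)$ both lie on the unit circle, $|a|=|b|=1$. Moreover, $\Omega^-$ is the open unit disk, so $R=1$ and every hyperplane $L(\tau,\omega)$ first meets $\overline{\Omega^-}$ at $\tau=-1$; hence $T_0(\omega)=-1$ for every unit vector $\omega$. Substituting $|a|=1$ and $T_0(\omega)=-1$ directly into the two cases of (\ref{E:short}) yields the intervals (\ref{E:circle-int}) verbatim.

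With the hypotheses of geometry~\#1 verified and the constants computed, the first assertion of Proposition~\ref{T:geom1} delivers (\ref{E:reduced}) and (\ref{E:reduced-g}) on the intervals (\ref{E:circle-int}), and the second assertion, which pairs each direction $\omega$ with $-\omega$ and invokes the Radon symmetry (\ref{E:Radon-symm}), then recovers $\mathcal{R}f(\tau,\omega)$ for every $\omega\in\mathbb{S}^1$ and $\tau\in(-1,1)=\mathcal{T}(\omega)$. The only step requiring genuine attention is the boundedness of $\gamma'$, which is precisely the reason the corollary must exclude the degenerate endpoint $\mu=\pi/2$ (at which the arc has vertical tangents and $\gamma'$ is unbounded); once this is handled, the proof is purely a matter of specialization.
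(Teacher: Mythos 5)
Your proposal is correct and follows the same route the paper intends: the corollary is an immediate specialization of Proposition~\ref{T:geom1} to the unit circle, obtained by checking the hypotheses of geometry \#1 (with $\gamma(x_1)=\sqrt{1-x_1^2}$, $\alpha=\sin\mu$, $\beta=\cos\mu$) and substituting $|a|=1$, $T_0(\omega)=-1$ into (\ref{E:short}). Your added remark on the boundedness of $\gamma'$ and the exclusion of $\mu=\pi/2$ is a correct and worthwhile detail that the paper leaves implicit.
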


We notice that for small values of parameter $\mu\in(0,\pi/2),$ the part of
the boundary $\Gamma\backslash S$ where measurements are not taken, is very
small. For values of $\mu$ close to $\pi/2,$ region $\Omega_{0}$ is too small
to be of practical interest. Values from the middle of the interval,
hopefully, represent a useful compromise. For example, for $\mu=\pi/4,$ region
$\Omega_{0}$ is the lower half of the open unit disk, and the opening
$\Gamma\backslash S$ in $\Gamma$ is a $90^{\circ}$ arc. The maximum length of
intervals in (\ref{E:circle-int})\ is attained for $\nu=3\pi/4;$ it is equal
to $2-1/\sqrt{2}.$ Although the diameter of $\Omega_{0}$ is $2,$ only the data
in a reduced temporal range of $[0,2-1/\sqrt{2}]\approx\lbrack0,1.293]$ are
utilized. Thus, theoretically exact reconstruction in the case of \textbf{open
circular geometry} is achieved from the data reduced both spatially and temporally.

The same line of reasoning can be extended to the 3D case. We do not formulate
a general statement, and consider instead a particular case of an \textbf{open
spherical geometry} defined as follows. Surface $\Gamma$ is a unit sphere
centered at the origin, $\Gamma\backslash S$ is the part of the unit sphere
lying above the horizontal plane $x_{3}=\cos\mu,$ with $\mu\in(0,\pi/2),$ and
$\Omega_{0}$ is the part of the open unit ball lying under the horizontal
plane $x_{3}=\cos\mu-\sin\mu.$ Let us represent $-\omega$ in the form
$-\omega=(-\omega_{1},-\omega_{2},\cos\nu)$\ with $\nu\in\lbrack0,\pi].$ As
before, $\nu$ is the (positive) angle between $e_{3}$ and $(-\omega)$.

\begin{proposition}
\label{T:geomsphere}For the \textbf{open spherical geometry}, formulas
(\ref{E:reduced}), (\ref{E:reduced-g}) hold for all $\omega\neq(0,0,-1)$ and
$\tau$ lying within the intervals%
\begin{equation}
\tau\in\left\{
\begin{array}
[c]{cc}%
(-1,-\cos(\mu-\nu)+\sin\mu)], & \nu\in(0,\pi/2],\\
(-1,-\cos(\mu+\nu)-\sin\mu)], & \nu\in\lbrack\pi/2,\pi].
\end{array}
\right.  \label{E:int-sphere}%
\end{equation}
Values of $\mathcal{R}f(\tau,\omega)$ for all values of $\omega\in
\mathbb{S}^{2},$ $\tau\in(-1,1),$ can be obtained using~(\ref{E:Radon-symm}).
\end{proposition}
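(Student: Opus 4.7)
The plan is to imitate the 2D argument of Propositions \ref{T:intervals} and \ref{T:geom1} almost verbatim, with the spherical cap $\Gamma\setminus S=\{y\in\mathbb{S}^2:y_3\geq\cos\mu\}$ playing the role of the 2D arc. The first step is to exploit the rotational symmetry of the whole configuration about the $x_3$-axis: since $\Gamma$, $S$, and $\Omega_0$ are all invariant under such rotations, and since Proposition \ref{T:exist-Heavi} singles out $\varphi_{\omega}$ uniquely, the density and the reconstruction formula transform covariantly with the rotation. Hence one may rotate $\omega$ into the $(x_1,x_3)$-plane and assume $\omega_2=0$, so that $\omega=(\omega_1,0,-\cos\nu)$ with $\omega_1=\pm\sin\nu$.

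Second, I would analyze the support of the error via (\ref{E:support}). Since $\omega_2=0$, the signed distance $d(\tau,\omega,y)=\tau-\omega_1 y_1-\omega_3 y_3$ depends only on $(y_1,y_3)$; by the reflection $y_2\mapsto -y_2$, which preserves the cap and $d$, the extremal value of $d$ over $\Gamma\setminus S$ is attained on the half-great-circle $\{y_2=0\}$, exactly the arc that appeared in the open circular geometry. It therefore suffices to check the inequality $\mathrm{dist}(y,\Omega_0)\geq d(\tau,\omega,y)$ for $y$ on that arc; however, the balls $B(y,d(\tau,\omega,y))$ for off-plane $y$ are still 3D, and one must verify that they avoid the 3D region $\Omega_0$. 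This follows from the vertical separation $\mathrm{dist}(y,\Omega_0)\geq y_3-(\cos\mu-\sin\mu)\geq\sin\mu$, exactly as in 2D.

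Third, I would carry out the three-case analysis of Proposition \ref{T:intervals} with the endpoints replaced by $a=(-\sin\mu,0,\cos\mu)$ and $b=(\sin\mu,0,\cos\mu)$, and with the role of $e_2$ taken by $e_3$. In Case 1 ($\omega_1\geq 0$, $\omega_3\geq 0$), the inequality $y_3\geq\cos\mu$ gives $d(\tau,\omega,y)\leq d(\tau,\omega,a)\leq\sin\mu$. Case 2 ($\omega_1\geq 0$, $-1<\omega_3<0$) uses the vector identity
\begin{equation*}
\mathrm{dist}(y,\Omega_0)\geq\sin\mu+(y-a)\cdot e_3\geq d(\tau,\omega,y)+(y-a)\cdot(\omega+e_3),
\end{equation*}
and one notes that $y-a$ has non-negative $x_3$-coordinate and (by the reflection reduction) can be taken with $y_1\geq -\sin\mu$, while $\omega+e_3$ has non-negative entries in the relevant coordinates, so the dot product is $\geq 0$. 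Case 3 ($\omega_1<0$) is symmetric under $a\leftrightarrow b$.

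Finally, to recover the full set of Radon projections, I would imitate the proof of Proposition \ref{T:geom1}: the two lines of (\ref{E:int-sphere}), applied to $\omega$ and to $-\omega$ and combined via the symmetry (\ref{E:Radon-symm}), cover the whole interval $\tau\in(T_0(\omega),T_1(\omega))=(-1,1)$. The main technical obstacle is the 3D incarnation of Case 2, where one must confirm that the auxiliary vector inequality still delivers a non-negative inner product when $y$ ranges over a 2-dimensional cap rather than a 1-dimensional arc; the rotational reduction plus the $y_2\mapsto -y_2$ symmetry makes this reduction clean, collapsing the extremal geometry back to the 2D analysis already performed.
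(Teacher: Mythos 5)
Your proposal is correct and follows exactly the route the paper indicates: reduce by rotational symmetry about the $x_3$-axis to $\omega=(\omega_1,0,\omega_3)$, then repeat the case analysis of Proposition \ref{T:intervals} (with $a=(-\sin\mu,0,\cos\mu)$, $b=(\sin\mu,0,\cos\mu)$, $e_3$ in place of $e_2$, and the vertical separation $\mathrm{dist}(y,\Omega_0)\geq\sin\mu+(y-a)\cdot e_3$) and the pairing of $\omega$ with $-\omega$ via (\ref{E:Radon-symm}) as in Proposition \ref{T:geom1}. The paper leaves these details as a sketch; you have filled them in faithfully, including the key observation that since $\omega_2=0$ the $y_2$-component of $y-a$ drops out of the inner product with $\omega+e_3$.
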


Due to the rotational symmetry with respect to the vertical axis (spanned by
vector $e_{3}=(0,0,1)),$ in order to prove this proposition it is enough to
consider directions $\omega$ in the form $\omega=(\omega_{1},0,\omega_{3}).$
The proof is completed by considerations very similar to those in Propositions
\ref{T:intervals} and \ref{T:geom1}, and in Corollary \ref{T:geomcirc}.
Similarly to the circular case, only values of $G(t,y)$ in the reduced
temporal range of $[0,2-1/\sqrt{2}]$ are used in Proposition
\ref{T:geomsphere}, thus delivering theoretically exact reconstruction from
both spatially and temporally reduced data.

The explicit reconstruction techniques presented in this section are somewhat
suboptimal in that the "visible" regions corresponding to the acquisition
surfaces $S$ in our geometries are larger than our $\Omega_{0}$'s$.$ On the
other hand, these are currently \emph{the only known results}\textbf{ }for
explicit PAT/TAT reconstruction from bounded open surfaces.

\subsection{Remarks on the practical aspects of the method}

The method we propose is explicit only if the density $\varphi_{\omega}(t,y)$
is known. For a spherical or circular region $\Omega^{-}$ this function can be
obtained by exploiting the spherical symmetry and expanding functions in
spherical or circular harmonics. We analyze this case in detail in the next
section. For more general acquisition surfaces, the set of single layer
densities $\varphi_{\omega}(t,y)$ for each $\omega\in\mathbb{S}^{n-1}$ should
be pre-computed once and stored. This problem is not necessarily easy.
However, as was mentioned in Section \ref{S:repres}, it is closely related to
the problem of the time domain acoustic surface scattering. There exists a
significant body of work concerning both the theory and the methods of
efficient numerical computation of such functions, see \cite{Sayas} and
references therein.

Another practical concern is the number of operations required to realize this
technique, assuming that the densities are known. For each fixed $\omega
\in\mathbb{S}^{n-1}$, the density is a function of time and boundary
coordinates, making them functions of $n$ dimensions in the space
$\mathbb{R}^{n}.$ For each $\tau$ an $n$-dimensional integral should be
computed. If discretization in all variables is done using $\mathcal{O}(m)$
points, each value of $\mathcal{R}f(\tau,\omega)$ will require $\mathcal{O}%
(m^{n})$ floating point operations (flops). Since full Radon data would have
$\mathcal{O(}m^{n})$ data points, the total computational cost would be
$\mathcal{O(}m^{2n}),$ or $\mathcal{O(}m^{4})$ in 2D and $\mathcal{O(}m^{6})$
in 3D. For comparison, a backprojection step of a filtration/backprojection
algorithm requires $\mathcal{O(}m^{3})$ flops in 2D and $\mathcal{O(}m^{5})$
flops in 3D. In other words, a straightforward implementation of formulas
(\ref{E:thm1a}), (\ref{E:thm1b}) would lead to a slower algorithm than
filtration/backprojection techniques.

However, even for a general acquisition surface $\Gamma,$ the present method
can be accelerated. The time-consuming computation of integrals can be
reformulated as a standard convolution in time (see \ equation
(\ref{E:convolution})); further, the integration order can be interchanged
yielding
\begin{equation}
\mathcal{R}f(\tau,\omega)=\int\limits_{S}\int\limits_{\mathbb{R}}%
G(t,y)\varphi_{\omega}(\tau-t,y)dtdy,\qquad\tau\in(T_{0}(\omega
),T_{\mathrm{med}}(\omega)],\qquad\omega\in\mathbb{S}^{n-1}. \label{E:convo}%
\end{equation}
The inner integral in (\ref{E:convo}) is a convolution that can be computed
using Fast Fourier Transform (FFT) techniques, separately for each point $y\in
S$ and each vector $\omega\in\mathbb{S}^{n-1}$. This way, for each $\omega,$
the values for all $\tau\in(T_{0}(\omega),T_{\mathrm{med}}(\omega))$ are found
at once, at the price of $\mathcal{O(}m^{n}\log m)$ flops. This needs to be
repeated for all $\omega\in\mathbb{S}^{n-1},$ resulting in $\mathcal{O(}%
m^{2n-1}\log m)$ flops algorithm. Such an accelerated method would require
$\mathcal{O(}m^{3}\log m)$ flops in 2D and $\mathcal{O(}m^{5}\log m)$ flops in
3D, which is slower, but not catastrophically, than filtration/backprojection routines.

If the acquisition surface have rotational symmetries, the computation can be
further accelerated. The algorithms for spherical and circular acquisition
surfaces presented in the next sections are significantly faster.

\section{Circular and spherical geometries\label{S:Spheres}}

In this section we find explicit expressions for the density of single layer
potentials for circular and spherical acquisition geometries, and develop
efficient numerical algorithms that exploit the arising rotational symmetries.

We will utilize the forward and inverse Fourier transforms $\mathcal{F}$ and
$\mathcal{F}^{-1}$ on $\mathbb{R}$ defined as follows:%
\begin{equation}
\left(  \mathcal{F}h\right)  (\rho)=\int\limits_{\mathbb{R}}h(t)e^{i\rho
t}dt,\qquad\left(  \mathcal{F}^{-1}\hat{h}\right)  (t)=\frac{1}{2\pi}%
\int\limits_{\mathbb{R}}\hat{h}(\rho)e^{-i\rho t}d\rho,\qquad t,\rho
\in\mathbb{R}.\nonumber
\end{equation}
The forward transform will be applied with respect to the time variable. In
particular, let us Fourier transform equation (\ref{E:Gr-conv}). This yields%

\begin{equation}
\hat{G}(\rho,y)\equiv\left(  \mathcal{F}G\right)  (\rho,y)=\int\limits_{\Omega
}f(x)\left[  \mathcal{F}\Phi_{n}\right]  (\rho,y-x)dx=\int\limits_{\Omega
}f(x)\hat{\Phi}_{n}(\rho,y-x)dx, \label{E:Fourier-convolution}%
\end{equation}
where $\hat{G}(\rho,y)$ is the Fourier transform (in time) of the data
$G(t,y)$, and $\hat{\Phi}_{n}(\rho,x)\equiv\left[  \mathcal{F}\Phi_{n}\right]
(\rho,x)$ is the Fourier transform of the fundamental solution $\Phi_{n}$ of
the wave equation. It is well known that%
\begin{equation}
\hat{\Phi}_{2}(\rho,x)=\frac{i}{4}H_{0}^{(1)}(\rho|x|),\qquad\hat{\Phi}%
_{3}(\rho,x)=\frac{i\rho}{4\pi}h_{0}^{(1)}(\rho|x|),
\label{E:Fourier-Green-3d}%
\end{equation}
where $H_{0}^{(1)}$ and $h_{0}^{(1)}$ are, respectively, the regular and
spherical Hankel functions of order~0. Each of $\hat{\Phi}_{n}(\rho,x),$
$n=2,3,$ is also the fundamental solution of the Helmholtz equation in
$\mathbb{R}^{n},$ satisfying radiation condition at infinity\cite{Colton}.
Importantly, with the bar denoting complex conjugation,%
\[
\overline{\hat{\Phi}_{n}(\rho,x)}=\hat{\Phi}_{n}(-\rho,x),\qquad\rho
\in\mathbb{R}.
\]
Instead of surface scattering problems, in this section we use harmonic
analysis in order to derive single layer representations for delta-waves. We
start by representing a family of smooth approximations to the delta wave
$\delta(\tau-\omega\cdot x),$ and then pass to the limit. Let us consider a
$C_{0}^{\infty}(\mathbb{R})$ function$~\eta,$ compactly supported on the
interval $(-1,1),$ such that $\int_{\mathbb{R}}\eta(t)dt=\int_{\mathbb{-}%
1}^{1}\eta(t)dt=1.$ Now we can define a delta-approximating family of
functions $\eta_{\varepsilon}(t)$ as follows%
\[
\eta_{\varepsilon}(t)\equiv\frac{1}{\varepsilon}\eta\left(  \frac
{t}{\varepsilon}-1\right)  .
\]
For a fixed $\varepsilon>0,$ function $\eta_{\varepsilon}(t)$ is defined on
$\mathbb{R}$ and finitely supported on $(0,2\varepsilon).$ Moreover,
\[
\int\limits_{\mathbb{R}}\eta_{\varepsilon}(t)dt=1,\qquad\text{and}\qquad\text{
}\eta_{\varepsilon}(t)\underset{\varepsilon\rightarrow0}{\rightarrow}%
\delta(t).
\]
Denote the Fourier transform of $\eta(t)$ by $\hat{\eta}_{\varepsilon}(t).$
Then $\eta_{\varepsilon}(t)=\frac{1}{2\pi}\int\limits_{\mathbb{R}}\hat{\eta
}_{\varepsilon}(\rho)e^{-i\rho t}d\rho,$ and the smooth plane wave
$\eta_{\varepsilon}(\tau-\omega\cdot x)$ on $\mathbb{R\times R}^{n}$ can be
expressed as follows:%
\begin{equation}
\eta_{\varepsilon}(\tau-\omega\cdot x)=\frac{1}{2\pi}\int\limits_{\mathbb{R}%
}\hat{\eta}_{\varepsilon}(\rho)e^{-i\rho(\tau-\omega\cdot x)}d\rho=\frac
{1}{2\pi}\int\limits_{\mathbb{R}}\hat{\eta}_{\varepsilon}(\rho)e^{-i\rho\tau
}e^{i\rho\omega\cdot x}d\rho. \label{E:smooth-wave}%
\end{equation}
In the limit $\varepsilon\rightarrow0$, plane waves $\eta_{\varepsilon}\left(
\tau-\omega\cdot x\right)  $ converge to $\delta\left(  \tau-\omega\cdot
x\right)  $ in the space of distributions.

\subsection{2D: circular and open circular geometries}

\subsubsection{Expression for the density}

Let us represent a plane wave $e^{i\rho\omega\cdot x}$ by a time-harmonic
single layer potential. \ \ We express vectors $x,\hat{y},$ and $\omega$ in
polar coordinates as $x=r(\cos\theta,\sin\theta),$ \ $\hat{y}=(\cos\psi
,\sin\psi),$ and $\omega=(\cos\varpi,\sin\varpi).$ The expansion of the plane
wave $e^{i\rho\omega\cdot x}$ in \ Fourier series with respect to angle
$\theta$ is given by the Jacobi-Anger formula \cite{Colton}:%
\begin{equation}
e^{i\rho\omega\cdot x}=\sum\limits_{k=-\infty}^{\infty}i^{|k|}J_{|k|}(\rho
r)e^{ik\varpi}e^{-ik\theta}. \label{E:Jacobi}%
\end{equation}
The fundamental solution $\hat{\Phi}_{2}(\rho,x)$ can be expanded using the
addition theorem for the Hankel function $H_{0}^{(1)}$ \cite{Colton}:%
\begin{equation}
H_{0}^{(1)}(\rho|\hat{y}-x|)=\sum\limits_{k=-\infty}^{\infty}H_{|k|}%
^{(1)}(\rho)J_{|k|}(\rho r)e^{ik(\psi-\theta)},\quad r<1.
\label{E:addition-2D}%
\end{equation}
The above formula allows us to express circular waves $J_{|k|}(\rho
r)e^{-ik\theta}$ by the single layer potentials:%
\begin{equation}
\frac{2}{\pi i}\int\limits_{\mathbb{S}^{1}}\frac{e^{-ik\psi}}{H_{|k|}%
^{(1)}(\rho)}\hat{\Phi}_{2}(\rho,x-\hat{y})d\hat{y}=J_{|k|}(\rho
r)e^{-ik\theta},\quad\hat{y}(\psi)=(\cos\psi,\sin\psi),\quad|x|<1,\quad
\rho\geq0, \label{E:single-2D}%
\end{equation}
By combining (\ref{E:single-2D})\ and (\ref{E:Jacobi})\ one obtains the
following representation for the plane wave:%
\begin{equation}
\int\limits_{\mathbb{S}^{1}}\left[  \sum\limits_{k=-\infty}^{\infty}%
\frac{2i^{|k|}e^{ik(\varpi-\psi)}}{\pi iH_{|k|}^{(1)}(\rho)}\right]  \hat
{\Phi}_{2}(\rho,x-\hat{y})d\hat{y}=e^{i\rho\omega\cdot x},\quad|x|<1,\quad
\rho\geq0. \label{E:2D-ugly}%
\end{equation}
Define function $\hat{\varphi}_{\omega}(\rho,\hat{y})$ through the expression
in brackets in (\ref{E:2D-ugly}) and its conjugate (for $\rho<0)$:%
\begin{equation}
\hat{\varphi}_{\omega}(\rho,\hat{y}(\psi))\equiv\frac{2}{\pi i}\sum
\limits_{k=-\infty}^{\infty}\frac{i^{|k|}e^{ik(\varpi-\psi)}}{H_{|k|}%
^{(1)}(\rho)},\quad\rho\geq0;\qquad\hat{\varphi}_{\omega}(-\rho,\hat{y}%
)\equiv\overline{\hat{\varphi}_{\omega}(\rho,\hat{y})}. \label{E:def-dens-2D}%
\end{equation}
With so defined $\hat{\varphi}_{\omega}(\rho,\hat{y})$ the following formula
holds for all real $\rho$:
\begin{equation}
\int\limits_{\mathbb{S}^{1}}\hat{\varphi}_{\omega}(\rho,\hat{y})\hat{\Phi}%
_{2}(\rho,x-\hat{y})d\hat{y}=e^{i\rho\omega\cdot x},\quad|x|<1,\quad\rho
\in\mathbb{R}. \label{E:gen-plane-2D}%
\end{equation}
By multiplying (\ref{E:gen-plane-2D}) with $\frac{1}{2\pi}\hat{\eta
}_{\varepsilon}(\rho)e^{-i\rho\tau},$ integrating over $\mathbb{R}$, and using
(\ref{E:smooth-wave}), one obtains%
\[
\int\limits_{\mathbb{S}^{1}}\left(  \frac{1}{2\pi}\int\limits_{\mathbb{R}%
}\left[  \hat{\varphi}_{\omega}(\rho,\hat{y})\hat{\eta}_{\varepsilon}%
(\rho)\right]  \hat{\Phi}_{2}(\rho,x-\hat{y})e^{-i\rho\tau}d\rho\right)
d\hat{y}=\eta_{\varepsilon}\left(  \tau-\omega\cdot x\right)  ,\quad|x|<1.
\]
Since $\hat{\eta}_{\varepsilon}(\rho)$ decays at infinity faster than any
rational function of $\rho,$ the product $\hat{\varphi}_{\omega}(\rho,\hat
{y})\hat{\eta}_{\varepsilon}(\rho)$ can be Fourier transformed. Define
$\varphi_{\omega,\varepsilon}(\tau,\hat{y})$ on $\mathbb{R}\times
\mathbb{S}^{1}$ as follows%
\[
\varphi_{\omega,\varepsilon}(\tau,\hat{y})=\mathcal{F}^{-1}(\hat{\eta
}_{\varepsilon}(\rho)\hat{\varphi}_{\omega}(\rho,\hat{y}))(\tau,\hat{y}),
\]
Then, plane wave $\eta_{\varepsilon}\left(  \tau-\omega\cdot x\right)  $ is
represented by a single layer potential with density $\varphi_{\omega
,\varepsilon}(\tau,\hat{y})$
\[
\eta_{\varepsilon}\left(  \tau-\omega\cdot x\right)  =\int\limits_{\mathbb{S}%
^{1}}\left(  \int\limits_{\mathbb{R}}\varphi_{\omega,\varepsilon}(t,\hat
{y})\Phi_{2}(\tau-t,x-\hat{y})dt\right)  d\hat{y}.
\]
For each fixed $\varepsilon,$ wave $\eta_{\varepsilon}\left(  \tau-\omega\cdot
x\right)  $ is in the form (\ref{E:planewave}) and satisfies (\ref{E:finitesp}%
). Therefore, due to Proposition \ref{T:exist}, density $\varphi
_{\omega,\varepsilon}(t,\hat{y})$ vanishes on $W\cap(\mathbb{R\times S}^{1}),$
where $W$ is defined by (\ref{E:def-W}), with $\mathcal{T}(\omega)=(-1,1).$

Now, let us take the limit $\varepsilon\rightarrow0.$ In this limit,
$\eta_{\varepsilon}\left(  \tau-\omega\cdot x\right)  \rightarrow\delta\left(
\tau-\omega\cdot x\right)  .$ On the other hand, functions $\varphi
_{\omega,\varepsilon}(t,\hat{y})$ converge to the distribution $\varphi
_{\omega}(t,\hat{y})$ defined by its Fourier transform\ (\ref{E:def-dens-2D}).
Formula (\ref{E:single-delta}) holds for this density $\varphi_{\omega}%
(t,\hat{y})$. Importantly, the limit distribution $\varphi_{\omega}(t,\hat
{y})$ inherits the sparse support of functions $\varphi_{\omega,\varepsilon
}(t,\hat{y}),$ i.e. $\varphi_{\omega}(t,\hat{y})$ vanishes on $W\cap
(\mathbb{R\times S}^{1}).$ Moreover, all considerations of Section
\ref{S:spat-reduced} apply in the present case, allowing one to reconstruct
Radon projections from the data reduced both spatially and temporally, using
formula (\ref{E:reduced}) or (\ref{E:reduced-g}).

\subsubsection{Fast 2D algorithm and simulations\label{S:fast2D}}

In this section we develop a fast algorithm for the spatially reduced circular
geometry. The algorithm reconstructs Radon projections $\mathcal{R}%
f(\tau,\omega)$ from thermoacoustic data $g(t,\hat{y})$ measured on
$[0,2-1/\sqrt{2}]\times S.$ Following Proposition \ref{T:geomsphere}, our
technique involves three stages (I) reconstruction of $\frac{\partial
}{\partial\tau}\mathcal{R}f(\tau,\omega)$ using formula (\ref{E:reduced-g})
for values of $\tau$ given by inequalities (\ref{E:circle-int}) (II)
anti-differentiation in $\tau$ to obtain $\mathcal{R}f(\tau,\omega)$ on the
same intervals (III)\ reconstruction the rest of the projections
using~(\ref{E:Radon-symm}).

The latter two stages are almost trivial. The first stage of the algorithm can
be viewed as a convolution performed over the cylinder $\mathbb{R\times S}%
^{1}.$ Indeed, let us extend $g(t,\hat{y})$ by zero to all $\hat{y}\in
\Gamma\backslash S,$ and to all $t\notin$ $[0,2-1/\sqrt{2}].$ We will denote
the extended data by $\widetilde{g}(t,\hat{y});$ it is defined on
$\mathbb{R\times S}^{1}.$ Then formula (\ref{E:reduced-g})\ can be re-written
in the form:
\begin{equation}
\widetilde{\frac{\partial}{\partial\tau}\mathcal{R}f}(\tau,\omega
)=\int\limits_{\mathbb{R}}\int\limits_{\mathbb{S}^{1}}\widetilde{g}(t,\hat
{y})\varphi_{\omega}(\tau-t,\hat{y})d\hat{y}dt, \label{E:circ-conv}%
\end{equation}
where function $\widetilde{\frac{\partial}{\partial\tau}\mathcal{R}f}%
(\tau,\omega)$ is equal to 0 for all $\tau\leq-1;$ it coincides with
$\frac{\partial}{\partial\tau}\mathcal{R}f(\tau,\omega)$ on intervals
(\ref{E:circle-int}), and, due to the finite support of $t \mapsto
\widetilde{g}(t,\hat{y})$, it decays in the limit $t\rightarrow+\infty$ with
the rate of decay determined by that of $\varphi_{\omega}(t,\hat{y})$. Due to
the rotational symmetry of the integration surface, formula (\ref{E:circ-conv}%
) represents a convolution over $\mathbb{R\times S}^{1}.$ An efficient and
natural way of computing such a convolution is by using the Fourier
transform/Fourier series techniques. In fact, the expansion of the convolution
kernel $\varphi_{\omega}(t,\hat{y})$ in Fourier components is already given by
equation (\ref{E:def-dens-2D}). The remaining details are as follows.

By computing the Fourier transform of equation (\ref{E:circ-conv}) in $\tau,$
and expanding the data in Fourier series in $\psi$ (where $\hat{y}=\hat
{y}(\psi)$) one obtains%
\[
\widehat{\widetilde{\frac{\partial}{\partial\tau}\mathcal{R}f}}(\rho
,\omega)=\int\limits_{\mathbb{S}^{1}}\widehat{\widetilde{g}}(\rho,\hat{y}%
)\hat{\varphi}_{\omega}(\rho,\hat{y})d\hat{y},
\]
where $\widehat{\cdot}$ denotes the Fourier transform in $\tau.$ Further, for
$\rho\geq0,$ using the definition of $\hat{\varphi}_{\omega}$ (see
(\ref{E:def-dens-2D})) we find that%
\begin{equation}
\widehat{\widetilde{\frac{\partial}{\partial\tau}\mathcal{R}f}}(\rho
,\omega)=\sum\limits_{k=-\infty}^{\infty}\frac{4}{i}\frac{i^{|k|}%
\widehat{\widetilde{g}}_{k}(\rho)}{H_{|k|}^{(1)}(\rho)}e^{ik\varpi},\quad
\rho\geq0, \label{E:2d-coeffs}%
\end{equation}
where $\widehat{\widetilde{g}}_{k}(\rho)$ are the expansion coefficients of
$\widehat{\widetilde{g}}(\rho,\hat{y}(\psi))$ in Fourier series:%
\begin{equation}
\widehat{\widetilde{g}}_{k}(\rho)=\frac{1}{2\pi}\int\limits_{0}^{2\pi
}\widehat{\widetilde{g}}(\rho,\hat{y}(\psi))e^{-ik\psi}d\psi.
\label{E:Fourier-ser}%
\end{equation}
Formulas (\ref{E:2d-coeffs}) and (\ref{E:Fourier-ser}) lead to the following
algorithm for the open circular geometry. Let us define uniform computational
grids in $\tau,$ $t,$ $\rho,$ $\psi,$ and $\varpi,$ and assume that extended
data $\widetilde{g}(t,\hat{y}(\psi))$ is sampled on the product grid in $t$
and $\psi.$ We then

\noindent\textbf{1.} Expand $\widetilde{g}(t,\hat{y})$ in the Fourier series,
and Fourier-transform the result to obtain $\widehat{\widetilde{g}}_{k}(\rho)$
for each grid value of $\rho\geq0;$

\noindent\textbf{2.} For each grid value of $\rho\geq0,$ compute coefficients
$b_{k}(\rho)\equiv\frac{4}{i}\frac{i^{|k|}}{H_{|k|}^{(1)}(\rho)}%
\widehat{\widetilde{g}}_{k}(\rho)$ and extend them to negative $\rho$'s by
complex conjugation;

\noindent\textbf{3.} For each grid value of $\rho,$ sum up the series
$\sum\limits_{k}b_{k}(\rho)e^{ik\varpi}$, and apply the inverse Fourier
transform in $\rho,$ thus finding $\widetilde{\frac{\partial}{\partial\tau
}\mathcal{R}f}(\tau,\omega);$

\noindent\textbf{4.} Anti-differentiate $\widetilde{\frac{\partial}%
{\partial\tau}\mathcal{R}f}(\tau,\omega)$ to find $\widetilde{\mathcal{R}%
f}(\tau,\omega)$ within the intervals (\ref{E:circle-int});

\noindent\textbf{5.} Compute $\mathcal{R}f(\tau,\omega)$ by extracting the
correct values of $\widetilde{\mathcal{R}f}(\tau,\omega)$ within the intervals
(\ref{E:circle-int}), and finding remaining values using (\ref{E:Radon-symm}).

For the full circle acquisition in the reduced temporal range $t\in
\lbrack0,1]$, step 5 is replaced by step

\noindent\textbf{5}$^{\ast}$\textbf{.} Compute $\mathcal{R}f(\tau,\omega)$ by
extracting the correct values of $\widetilde{\mathcal{R}f}(\tau,\omega)$
within the interval $\tau\in\lbrack-1,0]$ for all $\omega\in\mathbb{S}^{1}$,
and finding remaining values using (\ref{E:Radon-symm}).

The reader may have already noticed that the first three steps of the above
algorithm coincide with the fast \emph{modified Norton algorithm} for a
circular (2D) acquisition geometry \cite{Kun-cyl}. The latter reference
contains the details of a fast implementation that requires only
$\mathcal{O}(m^{2}\log m)$ flops for an $m\times m$ image. In order to obtain
accurate reconstructions, we had to introduce a small modification in this
algorithm. Namely, for the several first terms of Fourier series
($k=-4,3,...,3,4)$ we use a significantly over-sampled (by a $\emph{factor}$
of $32)$ uniform grid in $\rho.$ This is a consequence of non-analytic
behavior of the terms with small $|k|$ near $\rho=0$ in the series giving by
equation (\ref{E:2d-coeffs}). This results in the slow decay of
$\widetilde{\frac{\partial}{\partial\tau}\mathcal{R}f}(\tau,\omega)$ for large
values of $\tau.$ Although we are not interested in those values, the slow
decay affects the accuracy of computation if the FFT is used to compute the
Fourier transform. We chose a brute force approach of over-sampling in the
frequency domain, which is equivalent to zero-padding $\widetilde{\frac
{\partial}{\partial\tau}\mathcal{R}f}(\tau,\omega)$ in $\tau.$ Since the
computation is done using the FFT, and only a small fixed number of terms (9
in our examples) is over-sampled, this does not compromise the speed of the algorithm.

\noindent{\textbf{Implementation and simulations.}} We illustrate performance
of our technique in a couple of numerical simulations. As a phantom
representing function $f(x)$ we used a linear combination of several slightly
smoothed characteristic functions of circles, as shown in
Figure~\ref{F:2D-phantom}. The data acquisition curve $S$ was the part of the
unit circle lying under the line $x_{2}=\sqrt{2}/2;$ the region $\Omega_{0}$
was the lower half of the unit disk, see Figure~\ref{F:2D-phantom}. Such an
acquisition geometry is a particular case of the open circular geometry
defined above (see Corollary \ref{T:geomcirc}), with $\mu=\pi/4.$
\begin{figure}[h]
\begin{center}
\includegraphics[scale = 0.5]{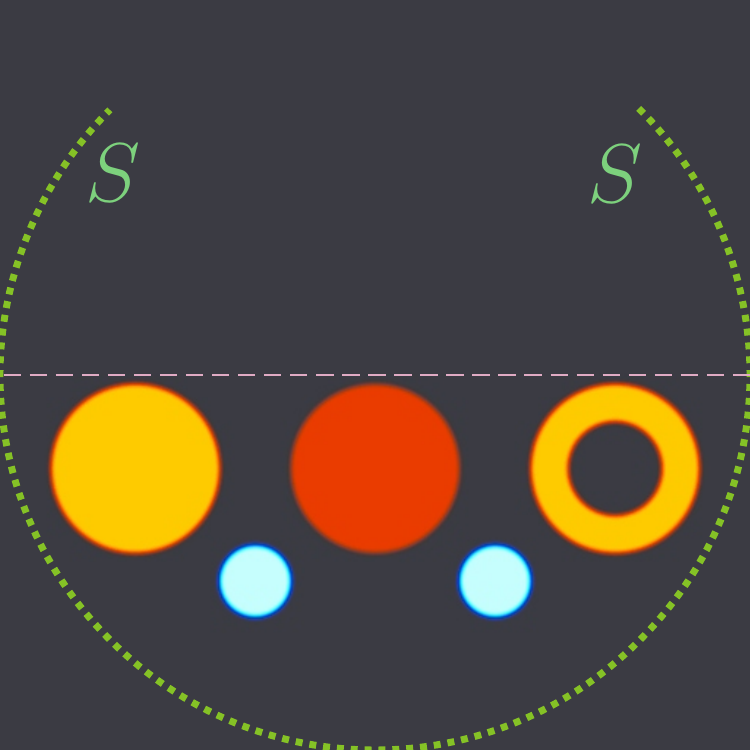} \phantom{a}
\includegraphics[scale = 0.45]{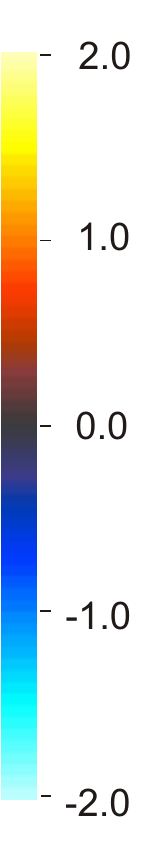}
\end{center}
\par
\vspace{-5mm} \caption{2D phantom; the dotted line shows the detector
locations $S,$ the thin dashed line is the upper boundary of the region
$\Omega_{0}.$ }%
\label{F:2D-phantom}%
\end{figure}The detector locations were modeled by sampling the unit circle
using 512 equispaced points. We evaluated the phantom on a fine 2048$\times
2048$ Cartesian grid, and computed the solution of the wave equation at the
grid's nodes, on the uniform grid in $t$ with 257 nodes covering the interval
$[0,2].$ Then this values were interpolated (spatially) to the detector
locations. The resulting solution of the wave equation on the 257$\times512$
grid on $[0,2]\times\lbrack0,2\pi]$ represents full data $g(t,\hat{y}(\psi)).$
It is shown in Figure~\ref{F:rec2D}(a). The accurate values of the Radon
projections were precomputed from $f(x)$ for future comparison on a uniform
257$\times512$ grid on $[-1,1]\times\lbrack0,2\pi]$. These Radon projections
are presented in Figure~\ref{F:rec2D}(b). In order to simulate the reduced
data $\widetilde{g}(t,\hat{y}(\psi))$ we replaced the values corresponding to
$\psi\in\lbrack\pi/4,3\pi/4]$ by zeros. In addition, we introduced a smooth
cut-off in $t,$ by multiplying $\widetilde{g}(t,\hat{y}(\psi))$ by a
$C^{6}[0,2]$ function $\chi(t),$ identically equal to 1 on the interval
$[0,1.3],$ and equal to 0 on the interval $[1.4,2].$ The resulting
$\widetilde{g}(t,\hat{y}(\psi+\pi))$ is shown in Figure~\ref{F:rec2D}(c); the
shift by $\pi$ helps to compare this image to that in part (a). The values of
$\widetilde{\mathcal{R}f}(\tau,\omega)$ obtained after the step 4 of our
algorithm are demonstrated in Figure~\ref{F:rec2D}(d), with the error (the
difference between $\widetilde{\mathcal{R}f}(\tau,\omega)$ and exact
$\mathcal{R}f(\tau,\omega))$ presented in the part (e). The wavy curve in the
latter image shows the boundary of the error-free part of
$\widetilde{\mathcal{R}f}(\tau,\omega)$ according to
Corollary~\ref{T:geomcirc}. The error under this curve is clearly small
compared to that above the curve. After step 5 we reconstructed projections
that closely coincide with the exact values, shown in part (b) of the figure.
In fact, the relative error in $L^{\infty}$ norm was about 5.0$\cdot$10$^{-4}$
in this experiment. The point-wise error is presented in Figure~\ref{F:rec2D}%
(f). \begin{figure}[h]
\begin{center}
\subfigure[Data $g(t,\hat{y}(\psi+\pi))$]{
\includegraphics[width=2.5in, height=1.47in]{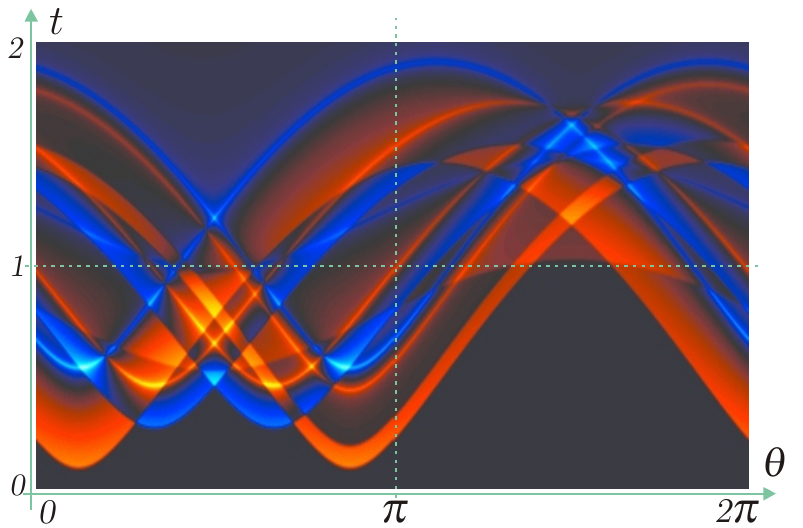}
\includegraphics[scale=0.42]{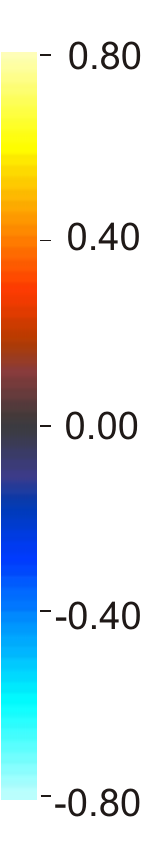} }
\subfigure[Exact Radon projections $Rf(\tau,\omega(\varpi))$]{
\includegraphics[width=2.5in, height=1.47in]{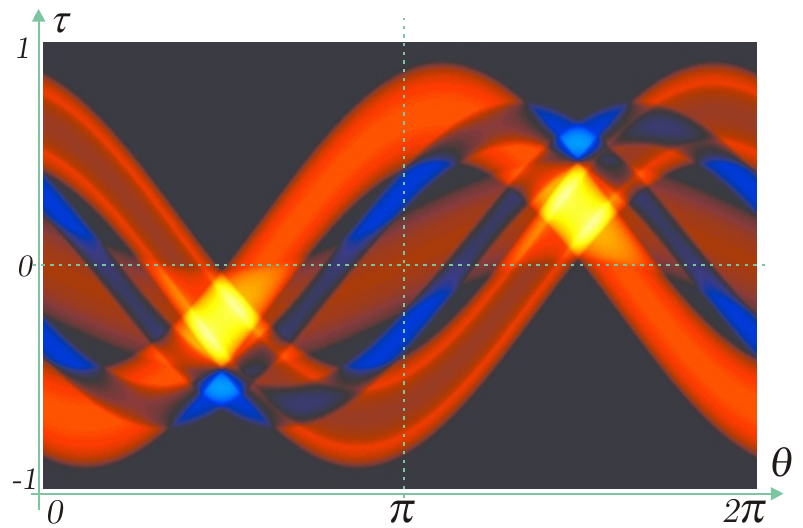}
\includegraphics[scale=0.42]{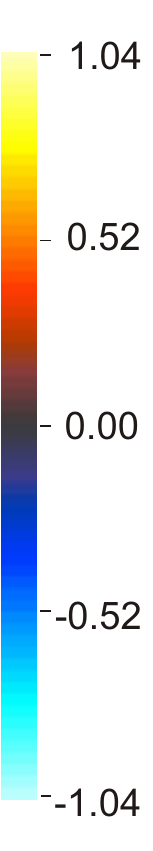} }
\subfigure[Reduced data $\widetilde{g}(t,\hat{y}(\psi+\pi))$]{
\includegraphics[width=2.5in, height=1.47in]{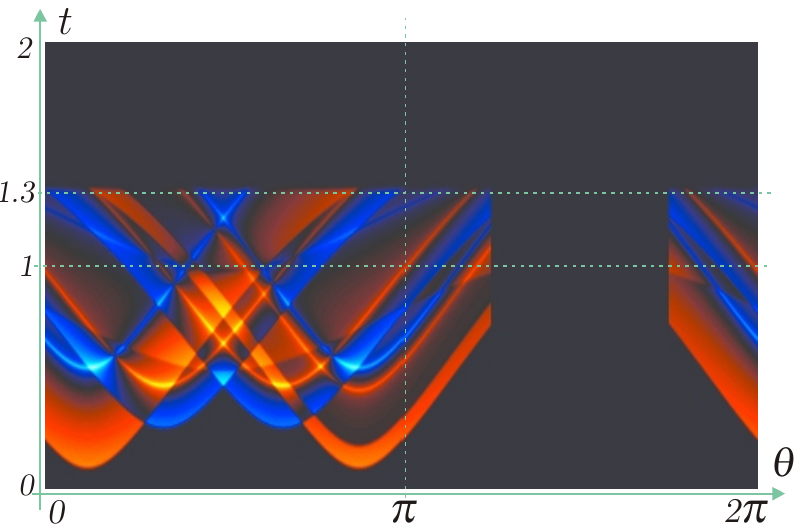}
\includegraphics[scale=0.42]{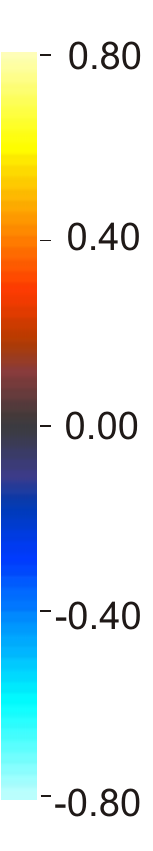} }
\subfigure[Projections $\widetilde{Rf}(\tau,\omega(\varpi))$ reconstructed on step 4]{
\includegraphics[width=2.5in, height=1.47in]{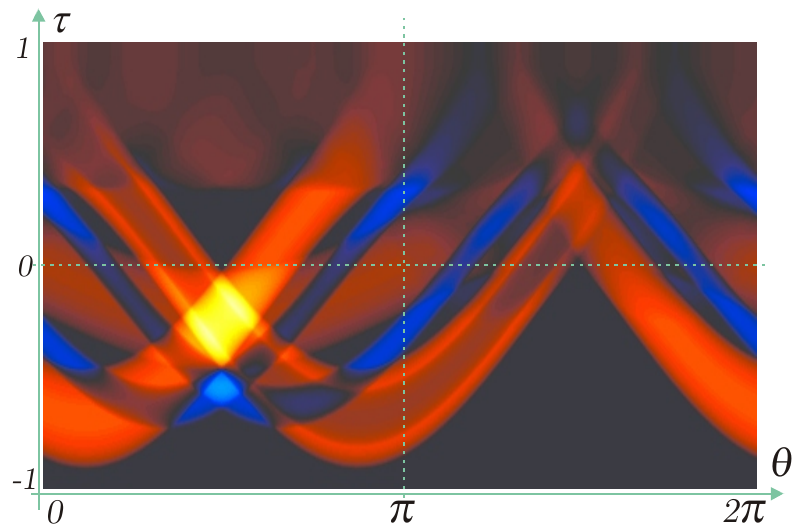}
\includegraphics[scale=0.42]{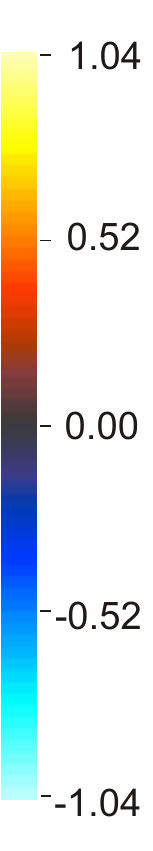} }
\subfigure[Reconstruction error after step 4]{
\includegraphics[width=2.5in, height=1.47in]{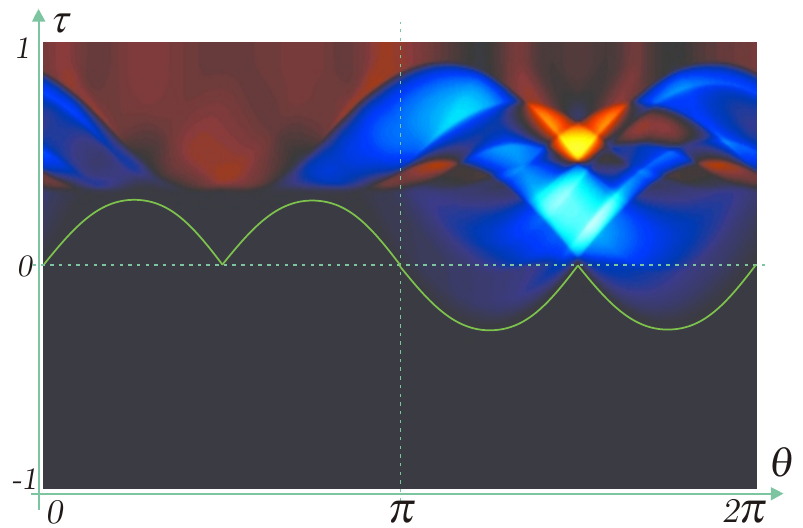}
\includegraphics[scale=0.42]{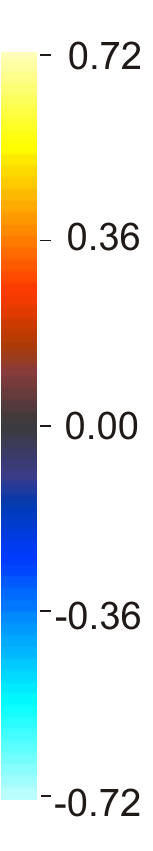} }
\subfigure[Reconstruction error after step 5]{
\includegraphics[width=2.5in, height=1.47in]{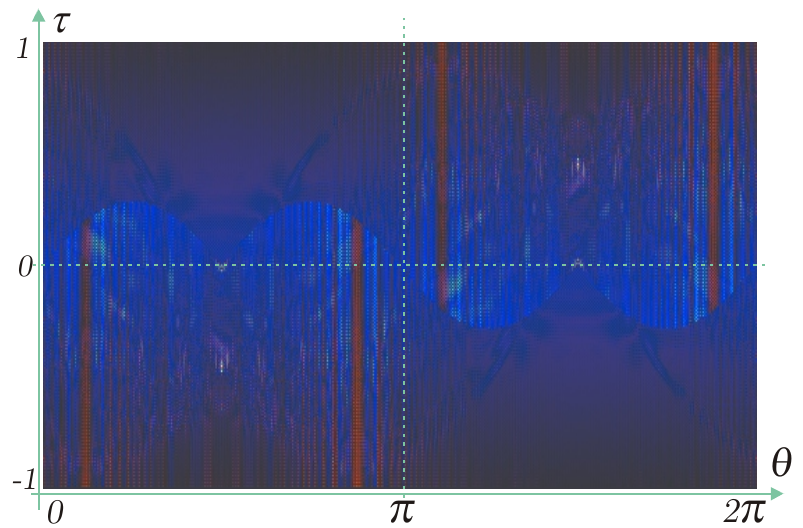}
\includegraphics[scale=0.42]{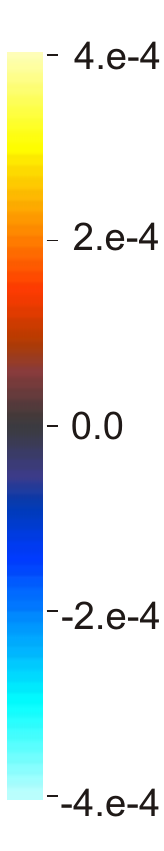} }
\end{center}
\par
\vspace{-5mm} \caption{Example of reconstruction from spatially and temporally
reduced data}%
\label{F:rec2D}%
\end{figure}In this paper we have investigated theoretically neither the
stability of the problem of finding the Radon projections, nor the stability
of our algorithms. We leave these topics for the future work. Instead, we will
demonstrate the stability of our approach numerically. We have modified the
previous simulation by contaminating the reduced data with normally
distributed spatially uncorrelated noise. The intensity of noise is $50\%$ in
$L^{2}$ norm. The noisy reduced data are shown in~Figure~\ref{F:recnoi2D}(a).
Perhaps, a better understanding of the level of noise can be gained from
Figure~\ref{F:recnoi2D}(c), where the graph of $\widetilde{g}(t,\hat{y}(0))$
is compared with the exact function. The result of the reconstruction is
demonstrated in~Figure~\ref{F:recnoi2D}(b); it should be compared against
Figure~\ref{F:rec2D}(b). One of the reconstructed projections, namely
$Rf(\tau,\omega(0))$, is plotted against the exact profile
in~Figure~\ref{F:rec2D}(d). The relative error in the reconstructed
projections (arising mostly due to the noisy data) is 7\% in $L^{2}$ norm.
This demonstrates a significant noise suppressing by the operator that
transforms the wave data into the projections.

\begin{figure}[h]
\begin{center}
\subfigure[Noisy reduced data $\widetilde{g}(t,\hat{y}(\psi+\pi))$]{
\includegraphics[width=2.5in, height=1.47in]{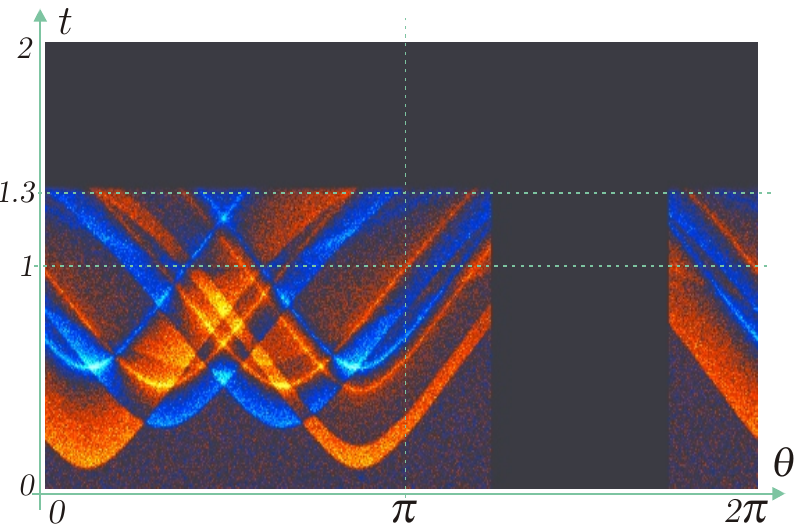}
\includegraphics[scale=0.42]{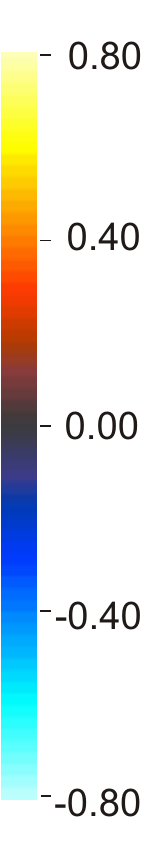} }
\subfigure[Reconstruction from noisy data]{
\includegraphics[width=2.5in, height=1.47in]{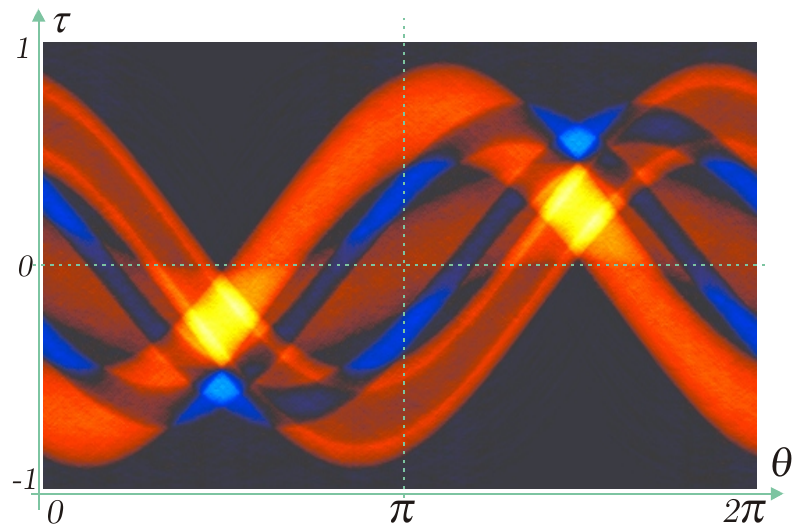}
\includegraphics[scale=0.42]{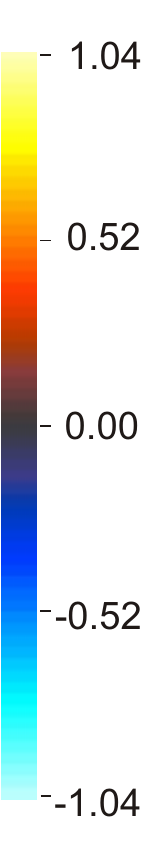} }
\subfigure[Noisy reduced data $\widetilde{g}(t,\hat{y}(0))$ \emph{vs} exact]{
\includegraphics[width=2.5in, height=1.0in]{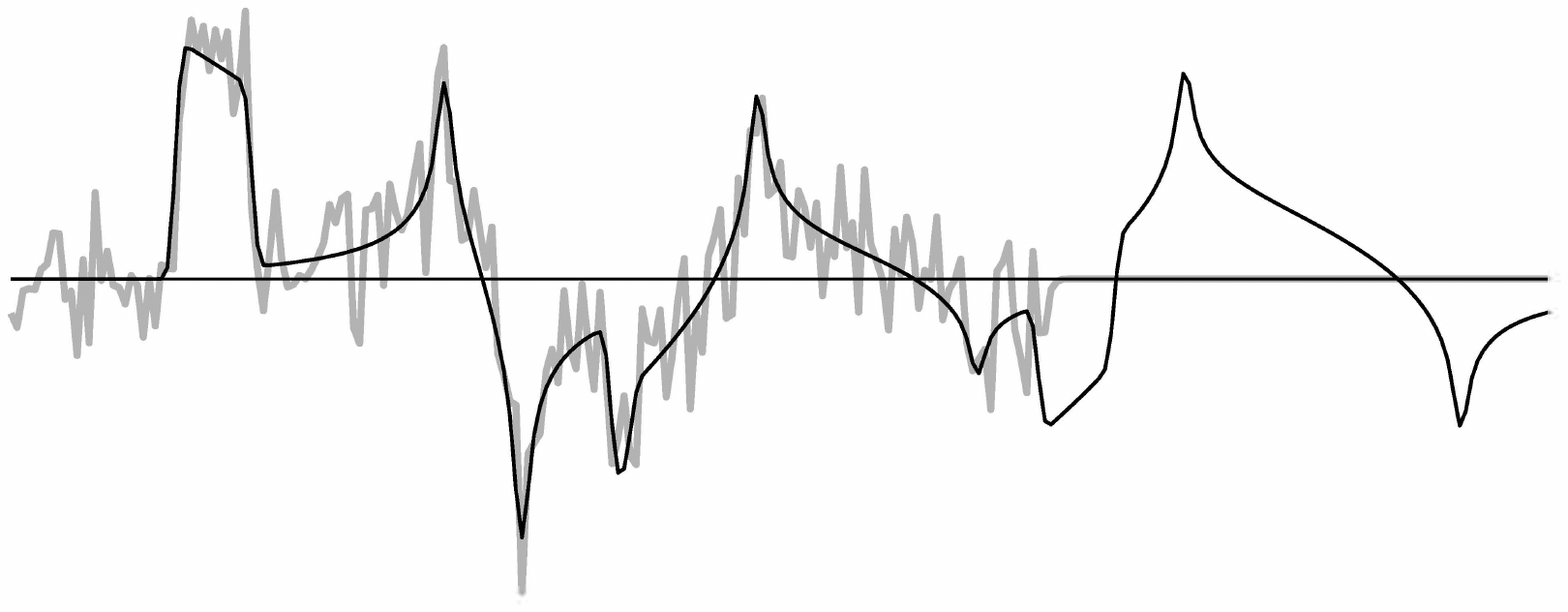} \phantom{aaa} }
\subfigure[Reconstructed $Rf(\tau,\omega(0))$ \emph{vs} exact]{
\includegraphics[width=2.5in, height=1.0in]{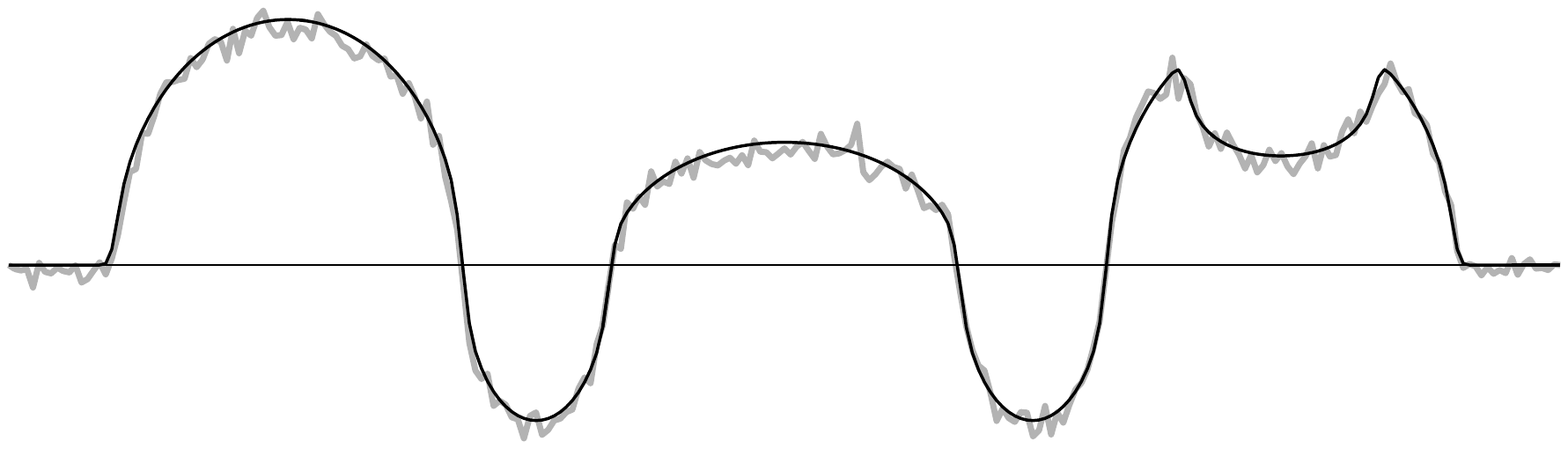}  \phantom{aaa} }
\end{center}
\par
\vspace{-5mm} \caption{Reconstruction from noisy reduced data}%
\label{F:recnoi2D}%
\end{figure}

\subsection{3D: spherical and open spherical geometries}

\subsubsection{Explicit expression for the density}

Let us find an explicit expression for the density $\varphi_{\omega}(\tau
,\hat{y})$ of the single layer potential representing a propagating delta wave
in 3D. The derivation is quite similar to that done above for the density in
the 2D case. The addition theorem for the Hankel function $h_{0}^{(1)}$ uses
spherical harmonics $Y_{k}^{m}(\hat{z}),$ $\hat{z}\in\mathbb{S}^{2},$
normalized so that%
\[
\int\limits_{\mathbb{S}^{2}}Y_{k}^{m}(\hat{z})\overline{Y_{k^{\prime}%
}^{m^{\prime}}(\hat{z})}d\hat{z}=\delta_{k,k^{\prime}}\delta_{m,m^{\prime}},
\]
where $\delta_{k,k^{\prime}}$ is the Kronecker symbol. Notice that%
\begin{equation}
Y_{k}^{m}(-\omega)=(-1)^{k}Y_{k}^{m}(\omega),\quad k=0,1,2,...\quad m=-k,..,k.
\label{E:Y-symm}%
\end{equation}
Then, the addition theorem for $h_{0}^{(1)}$ takes the following
form\cite{Colton}:%
\begin{equation}
h_{0}^{(1)}(\rho|y-x|)=4\pi\sum\limits_{k=0}^{\infty}\sum\limits_{m=-k}%
^{k}h_{k}^{(1)}(\rho|y|)j_{k}(\rho|x|)\overline{Y_{k}^{m}(\hat{y})}Y_{k}%
^{m}(\hat{x}),\quad y=\hat{y}|y|,\quad x=\hat{x}|x|, \label{E:addition-3d}%
\end{equation}
where $j_{k}$ and $h_{k}^{(1)}$ are, respectively, the spherical Bessel and
Hankel functions, $|y|>|x|,$ and $\rho\geq0.$

Consider now a plane wave $e^{i\rho\omega\cdot x},$ $x\in\mathbb{R}^{3}.$ It
can be expanded in spherical harmonics (using (\ref{E:Y-symm})\ and equation
(2.57) in \cite{Newton}) as follows%
\begin{equation}
e^{i\rho\omega\cdot x}=4\pi\sum\limits_{k=0}^{\infty}\sum\limits_{m=-k}%
^{k}i^{k}\overline{Y_{k}^{m}(\omega)}j_{k}(\rho|x|)Y_{k}^{m}(\hat{x}%
),\quad\rho\geq0. \label{E:time-harm3D}%
\end{equation}
For each fixed $\rho,$ spherical waves $j_{k}(\rho|x|)Y_{k}^{m}(\hat{x})$ in
the above equation can be represented in a form of a single layer potential of
the Helmholtz equation, supported on a unit sphere. The following formula is
easily verified using the addition theorem (equation (\ref{E:addition-3d})):%
\begin{equation}
\frac{1}{i\rho}\int\limits_{\mathbb{S}^{2}}\frac{1}{h_{k}^{(1)}(\rho)}%
Y_{k}^{m}(\hat{y})\hat{\Phi}_{3}(\rho,x-\hat{y})d\hat{y}=j_{k}(\rho
|x|)Y_{k}^{m}(\hat{x}),\quad|x|<1,\quad\rho\geq0. \label{E:single3D}%
\end{equation}
By combining (\ref{E:time-harm3D}) and (\ref{E:single3D}) one obtains:%
\begin{equation}
\int\limits_{\mathbb{S}^{2}}\left[  \frac{4\pi}{i\rho}\sum\limits_{k=0}%
^{\infty}\sum\limits_{m=-k}^{k}\frac{i^{k}\overline{Y_{k}^{m}(\omega)}%
Y_{k}^{m}(\hat{y})}{h_{k}^{(1)}(\rho)}\right]  \hat{\Phi}_{3}(\rho,x-\hat
{y})d\hat{y}=e^{i\rho\omega\cdot x},\quad|x|<1,\quad\rho\geq0.
\label{E:3D-ugly}%
\end{equation}
The interchange of order of integration and summation that took place in
deriving the above formula, is justified by the fast growth of $h_{k}%
^{(1)}(\rho)\equiv\sqrt{\pi/2\rho}H_{k}^{(1)}(\rho)$ with $k\rightarrow
\infty,$ due to $|H_{k}^{(1)}(\rho)|\thicksim\sqrt{2/\pi k}\left(  \frac
{2k}{e\rho}\right)  ^{k},$ (see formulas 9.3.1 in \cite{Abramowitz}). Let us
formally define function $\hat{\varphi}_{\omega}(\rho,\hat{y})$ as the
expression in brackets in (\ref{E:3D-ugly}) for $\rho\geq0,$ and as its
conjugate for $\rho<0$:%
\begin{equation}
\hat{\varphi}_{\omega}(\rho,\hat{y})\equiv\frac{4\pi}{i\rho}\sum
\limits_{k=0}^{\infty}\sum\limits_{m=-k}^{k}\frac{(-i)^{k}\overline{Y_{k}%
^{m}(\omega)}Y_{k}^{m}(\hat{y})}{h_{k}^{(1)}(\rho)},\quad\rho\geq0;\qquad
\hat{\varphi}_{\omega}(-\rho,\hat{y})\equiv\overline{\hat{\varphi}_{\omega
}(\rho,\hat{y})}. \label{E:def-dens-3D}%
\end{equation}
By computing a complex conjugate of (\ref{E:3D-ugly}), one obtains
representation for waves $e^{i\rho\omega\cdot x}$ with negative $\rho,$ so
that the following formula holds in 3D for all real $\rho$:
\begin{equation}
\int\limits_{\mathbb{S}^{2}}\hat{\varphi}_{\omega}(\rho,\hat{y})\hat{\Phi}%
_{3}(\rho,x-\hat{y})d\hat{y}=e^{i\rho\omega\cdot x},\quad|x|<1,\quad\rho
\in\mathbb{R}. \label{E:gen-plane-3D}%
\end{equation}

As in the 2D case, we first obtain single layer potentials representing smooth
approximations to the delta wave $\delta(\tau-\omega\cdot x),$ and then pass
to the limit. Consider a delta approximating family $\eta_{\varepsilon}(t).$
By multiplying (\ref{E:gen-plane-3D}) with $\frac{1}{2\pi}\hat{\eta
}_{\varepsilon}(\rho)e^{-i\rho\tau},$ integrating over $\mathbb{R}$, and using
(\ref{E:smooth-wave}), one obtains%
\[
\int\limits_{\mathbb{S}^{2}}\left(  \frac{1}{2\pi}\int\limits_{\mathbb{R}%
}\left[  \hat{\varphi}_{\omega}(\rho,\hat{y})\hat{\eta}_{\varepsilon}%
(\rho)\right]  \hat{\Phi}_{3}(\rho,x-\hat{y})e^{-i\rho\tau}d\rho\right)
d\hat{y}=\eta_{\varepsilon}\left(  \tau-\omega\cdot x\right)  ,\quad|x|<1.
\]
Since $\hat{\eta}_{\varepsilon}(\rho)$ decays at infinity faster than any
rational function of $\rho,$ the product $\hat{\varphi}_{\omega}(\rho,\hat
{y})\hat{\eta}_{\varepsilon}(\rho)$ can be Fourier transformed. Define
$\varphi_{\omega,\varepsilon}(\tau,\hat{y})$ on $\mathbb{R}\times
\mathbb{S}^{2}$ as follows%
\[
\varphi_{\omega,\varepsilon}(\tau,\hat{y})=\mathcal{F}^{-1}(\hat{\eta
}_{\varepsilon}(\rho)\hat{\varphi}_{\omega}(\rho,\hat{y}))(\tau,\hat{y}),
\]
Then, plane wave $\eta_{\varepsilon}\left(  \tau-\omega\cdot x\right)  $ is
represented by a single layer potential with density $\varphi_{\omega
,\varepsilon}(\tau,\hat{y})$
\[
\eta_{\varepsilon}\left(  \tau-\omega\cdot x\right)  =\int\limits_{\mathbb{S}%
^{2}}\left(  \int\limits_{\mathbb{R}}\varphi_{\omega,\varepsilon}(t,\hat
{y})\Phi_{3}(\tau-t,x-\hat{y})dt\right)  d\hat{y}.
\]
For each fixed $\varepsilon,$ wave $\eta_{\varepsilon}\left(  \tau-\omega\cdot
x\right)  $ is in the form (\ref{E:planewave}) and satisfies (\ref{E:finitesp}%
). Therefore, due to Proposition \ref{T:exist}, density $\varphi
_{\omega,\varepsilon}(t,\hat{y})$ vanishes on $W\cap(\mathbb{R\times S}^{2}),$
where $W$ is defined by (\ref{E:def-W}), with $\mathcal{T}(\omega)=(-1,1).$

As in the 2D case,\ in the limit $\varepsilon\rightarrow0$, plane waves
$\eta_{\varepsilon}\left(  \tau-\omega\cdot x\right)  $ converge to
$\delta\left(  \tau-\omega\cdot x\right)  .$ Repeating the argument presented
at the end of Section 5.1.1, one finds that the limit distribution
$\varphi_{\omega}(t,\hat{y})\equiv\lim_{\varepsilon\rightarrow0}%
\varphi_{\omega,\varepsilon}(t,\hat{y})$ vanishes on $W\cap(\mathbb{R\times
S}^{2}).$ This permits one to reconstruct Radon projections from the data
reduced both spatially and temporally, using formula (\ref{E:reduced}) or
(\ref{E:reduced-g}).

\subsubsection{Efficient 3D algorithm and simulations}

In this section we develop an efficient algorithm for the spatially reduced
spherical geometry. It is structurally similar to the 2D algorithm developed
in section \ref{S:fast2D}, and it reconstructs Radon projections
$\mathcal{R}f(\tau,\omega)$ from data $g(t,\hat{y})$ given on $[0,2-1/\sqrt
{2}]\times S.$ The computation is done in three stages: (I) reconstruction of
$\frac{\partial}{\partial\tau}\mathcal{R}f(\tau,\omega)$ using formula
(\ref{E:reduced-g}) for values of $\tau$ given by inequalities
(\ref{E:int-sphere}) (II) anti-differentiation in $\tau$ (III)\ reconstruction
the remaining projections using~(\ref{E:Radon-symm}).

As before, the first stage of the algorithm is a convolution, but performed
over $\mathbb{R\times S}^{2}$ instead of $\mathbb{R\times S}$. As in section
\ref{S:fast2D}, we extend $g(t,\hat{y})$ by zero to all $\hat{y}\in
\Gamma\backslash S,$ and to all $t\notin\lbrack0,2-1/\sqrt{2}],$ and denote
the extended data defined on $\mathbb{R\times S}^{2}$ by $\widetilde{g}%
(t,\hat{y}).$ Then (\ref{E:reduced-g})\ can be re-written in the form:
\begin{equation}
\widetilde{\frac{\partial}{\partial\tau}\mathcal{R}f}(\tau,\omega
)=\int\limits_{\mathbb{R}}\int\limits_{\mathbb{S}^{2}}\widetilde{g}(t,\hat
{y})\varphi_{\omega}(\tau-t,\hat{y})d\hat{y}dt, \label{E:sph-conv}%
\end{equation}
where function $\widetilde{\frac{\partial}{\partial\tau}\mathcal{R}f}%
(\tau,\omega)$ is equal to 0 for all $\tau\leq-1;$ it coincides with
$\frac{\partial}{\partial\tau}\mathcal{R}f(\tau,\omega)$ on intervals
(\ref{E:int-sphere}). Formula (\ref{E:sph-conv}) represents a convolution over
$\mathbb{R\times S}^{2}$ that can be computed using the Fourier-transform in
time and expansion in spherical harmonics in the spatial variables.

Equation (\ref{E:def-dens-3D}) yields the expansion of the kernel
$\varphi_{\omega}(t,\hat{y})$ in spherical harmonics/Fourier components. The
remaining details are as follows. We compute the Fourier transform of
(\ref{E:sph-conv}):%

\[
\widehat{\widetilde{\frac{\partial}{\partial\tau}\mathcal{R}f}}(\rho
,\omega)=\int\limits_{\mathbb{R}}\left[  \int\limits_{\mathbb{S}^{2}}%
\int\limits_{\mathbb{R}}\widetilde{g}(t,\hat{y})\varphi_{\omega}(\tau
-t,\hat{y})dtd\hat{y}\right]  e^{i\rho\tau}d\rho=\int\limits_{\mathbb{S}^{2}%
}\hat{\varphi}_{\omega}(\rho,\hat{y})\widehat{\widetilde{g}}(\rho,\hat
{y})d\hat{y},
\]
For $\rho\geq0,$ using the definition of $\hat{\varphi}_{\omega}$ (see
(\ref{E:def-dens-3D})) one obtains:%
\[
\widehat{\widetilde{\frac{\partial}{\partial\tau}\mathcal{R}f}}(\rho
,\omega)=\int\limits_{\mathbb{S}^{2}}\frac{4\pi}{i\rho}\sum\limits_{k=0}%
^{\infty}\sum\limits_{m=-k}^{k}\frac{i^{k}\overline{Y_{k}^{m}(\omega)}%
Y_{k}^{m}(\hat{y})}{h_{k}^{(1)}(\rho)}\widehat{\widetilde{g}}(\rho,\hat
{y})d\hat{y},\quad\rho\geq0.
\]
Further, expanding $\widehat{\widetilde{g}}(\rho,\hat{y})$ in spherical
harmonics yields coefficients $\widehat{\widetilde{g}}_{m,k}(\rho)$:%
\[
\widehat{\widetilde{g}}_{m,k}(\rho)=\int\limits_{\mathbb{S}^{2}}Y_{k}^{m}%
(\hat{y})\widehat{\widetilde{g}}(\rho,\hat{y})d\hat{y},\quad k=0,1,2,...,\quad
m=-k,..,k,
\]
so that%
\begin{equation}
\widehat{\widetilde{\frac{\partial}{\partial\tau}\mathcal{R}f}}(\rho
,\omega)=\sum\limits_{k=0}^{\infty}\sum\limits_{m=-k}^{k}\frac{4\pi}{i\rho
}\frac{i^{k}\widehat{\widetilde{g}}_{m,k}(\rho)}{h_{k}^{(1)}(\rho)}%
\overline{Y_{k}^{m}(\omega)},\quad\rho\geq0. \label{E:3D-coeffs-new}%
\end{equation}

In order to discretize the above formulas, we define computational grids in
$t,$ $\tau,$ $\rho,$ $\hat{y},$ and $\omega,$ and assume that extended data
$\widetilde{g}(t,\hat{y})$ is sampled on the product grid in $t$ and $\hat
{y}.$ We utilize uniform grids in $t,$ $\tau,$ and $\rho,$ allowing us to
compute the Fourier transforms using the FFT. Vectors $\hat{y}$ and $\omega$
on $\mathbb{S}^{2}$ are parametrized using azimuthal angle $\theta$ and the
polar angle $\varphi.$ We use uniform grid in $\theta$ and Gaussian
discretization points in $\cos\varphi.$ The steps of the algorithm are:

\noindent\textbf{1.} Expand $\widetilde{g}(t,\hat{y})$ in spherical harmonics
in $\hat{y}$ and compute the Fourier transform in $t,$ thus, obtaining
$\widehat{\widetilde{g}}_{m,k}(\rho),$ $k=0,1,2,...,\quad m=-k,..,k;$

\noindent\textbf{2.} For each grid value of $\rho\geq0,$ compute coefficients
$b_{m,k}(\rho)\equiv\frac{4\pi}{i\rho}\frac{i^{k}}{h_{k}^{(1)}(\rho
)}\widehat{\widetilde{g}}_{m,k}(\rho)$, and extend to negative $\rho$ by
complex conjugation;

\noindent\textbf{3.} Sum up series $\sum\limits_{k=0}^{\infty}\sum
\limits_{m=-k}^{k}b_{m,k}(\rho)\overline{Y_{k}^{m}(\omega)}$ for each grid
value of $\omega$ and $\rho$ and compute the inverse Fourier transform
in\ $\rho$ for each $\omega.$ This corresponds to formula
(\ref{E:3D-coeffs-new}), and yields values of $\widetilde{\frac{\partial
}{\partial\tau}\mathcal{R}f}(\tau,\omega)$ on a product grid in $\tau$ and
$\omega$.

\noindent\textbf{4.} Anti-differentiate $\widetilde{\frac{\partial}%
{\partial\tau}\mathcal{R}f}(\tau,\omega)$ on intervals (\ref{E:int-sphere})
finding $\widetilde{\mathcal{R}f}(\tau,\omega);$

\noindent\textbf{5.} Compute $\mathcal{R}f(\tau,\omega)$ by extracting the
correct values of $\widetilde{\mathcal{R}f(\tau,\omega)}$ within the intervals
(\ref{E:int-sphere}), and by finding the remaining values using
(\ref{E:Radon-symm}).

\begin{figure}[h]
\begin{center}
\subfigure[Data $g(t,\hat{y}(\theta_0,\varphi))$, $\theta_0 \approx 69^\circ$]
{\includegraphics[scale=0.67]{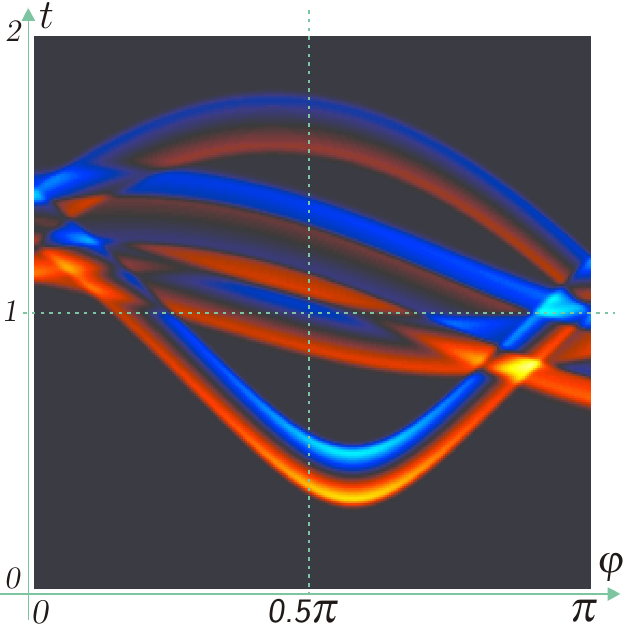}
\includegraphics[scale=0.47]{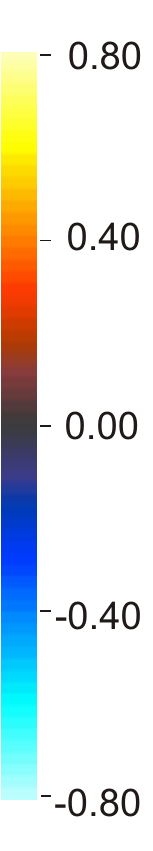} }
\subfigure[Reduced data $\widetilde{g}(t,\hat{y}(\theta_0,\varphi))$]{
\includegraphics[scale=0.67]{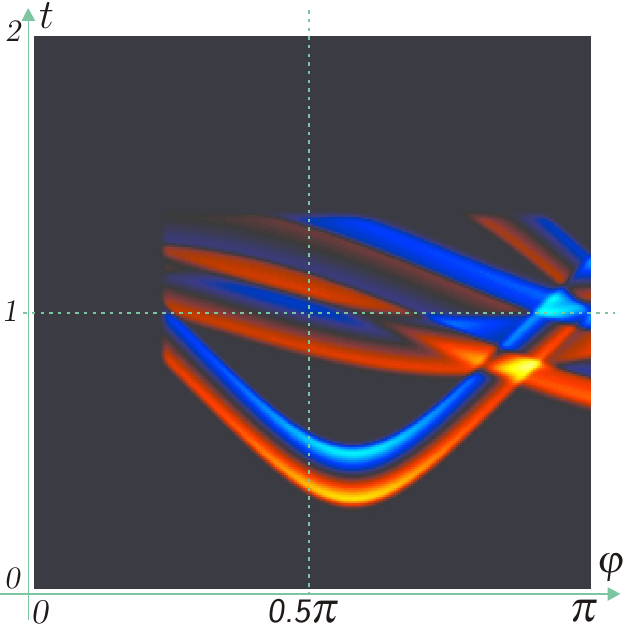}
\includegraphics[scale=0.47]{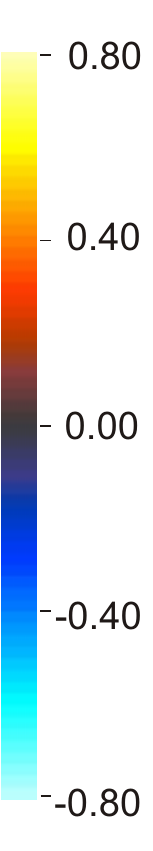} }
\subfigure[Reduced noisy data $\widetilde{g}(...)$]{
\includegraphics[scale=0.67]{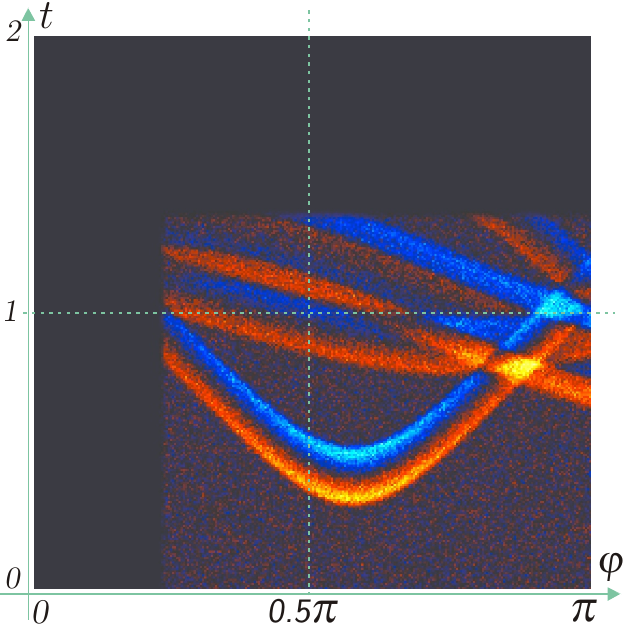}
\includegraphics[scale=0.47]{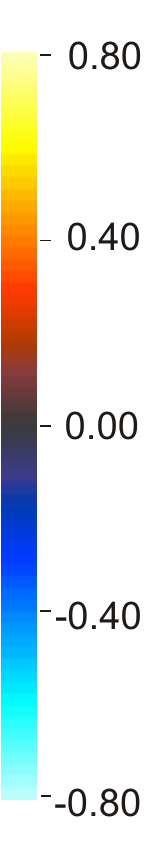} }
\par
\subfigure[Exact $Rf(\tau,\omega(\theta_0,\varphi))$, $\theta_0 \approx 69^\circ$]
{\includegraphics[scale=0.67]{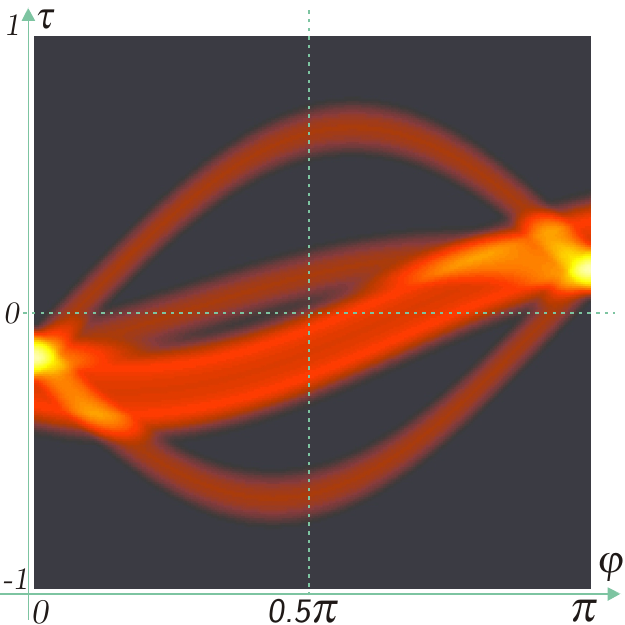}
\includegraphics[scale=0.47]{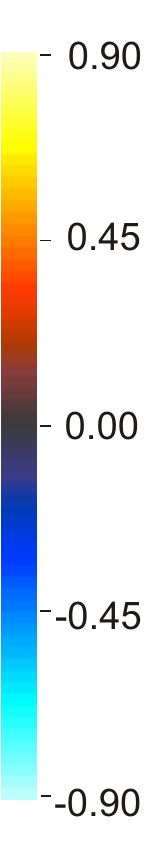} }
\subfigure[Step 4: $\widetilde{Rf}(\tau,\omega(\theta_0,\varphi))$]{
\includegraphics[scale=0.67]{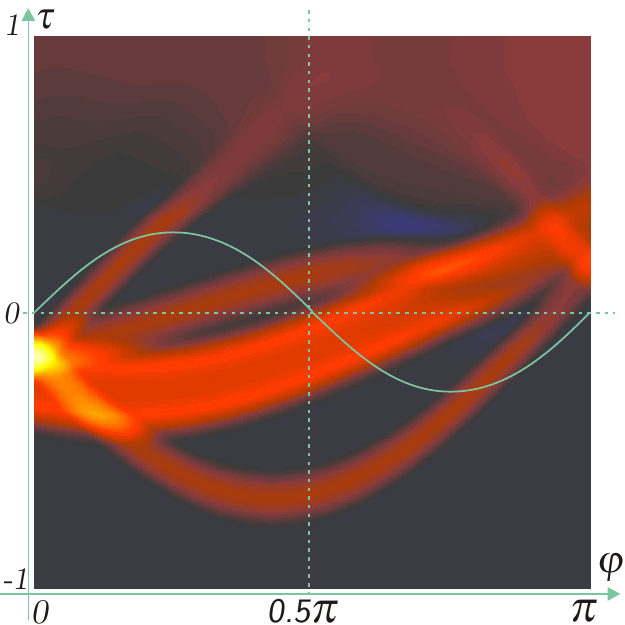}
\includegraphics[scale=0.47]{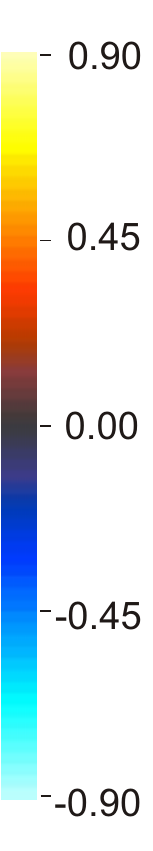} }
%\subfigure[Error in $Rf(\tau,\omega)$ after step 5]{
\subfigure[Step 5: error in $Rf(\tau,\omega(\theta_0,\varphi))$]{
\includegraphics[scale=0.67]{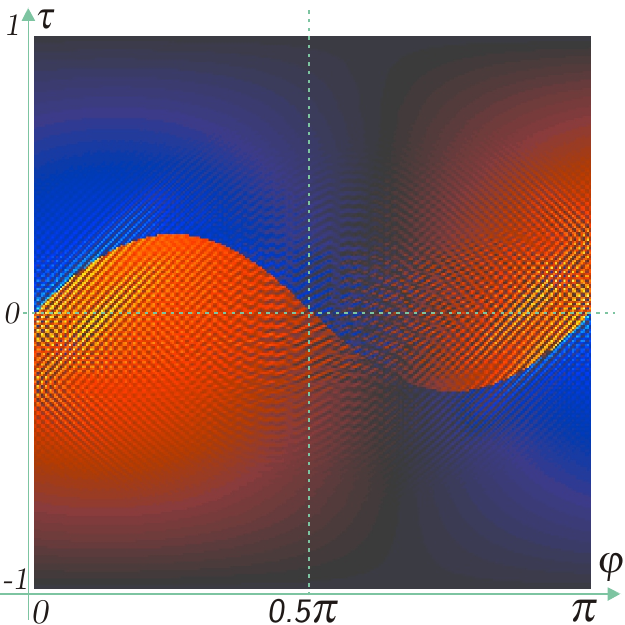}
\includegraphics[scale=0.47]{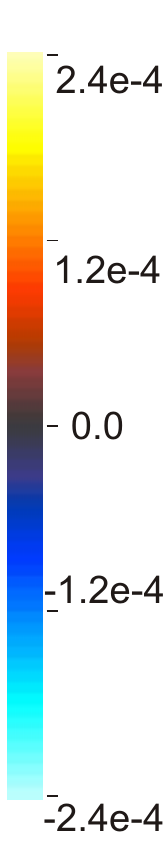} }
\par
\vspace{-5mm}
\end{center}
\caption{Example of a reconstruction from 3D spatially and temporally reduced
data}%
\label{F:rec3D}%
\end{figure}
The first three steps of the above algorithm coincide with the \emph{modified
Norton algorithm} for the spherical acquisition geometry \cite{Kun-cyl}. The
latter method is an efficient technique requiring full data $G(t,\hat{y}),$
known on $[0,2]\mathbb{\times S}^{2}$; it represents a development of
unaccelerated reconstruction algorithms first presented in
\cite{Norton1,Norton2}. Steps 4 and 5 of the present method are new; they
allow for the treatment of temporally and spatially reduced data that cannot
be handled by previous techniques. We refer the reader to \cite{Kun-cyl} for
the operation count and implementation details.

The above algorithm also works for the full spherical acquisition
($S=\Gamma\equiv\mathbb{S}^{2}$) with the temporally reduced data known on the
time interval $[0,1].$ The only modification is in the step 5, which we
replace with the step

\noindent\textbf{5}$^{\ast}$\textbf{.} Compute $\mathcal{R}f(\tau,\omega)$ by
extracting the correct values of $\widetilde{\mathcal{R}f}(\tau,\omega)$
within the interval $\tau\in\lbrack-1,0]$ for all $\omega\in\mathbb{S}^{2}$,
and finding remaining values using (\ref{E:Radon-symm}).

Interestingly, in contrast to the 2D case, no over-sampling in $\rho$ was
needed in the 3D\ case. While we do not have a rigorous explanation of this
phenomenon, the suspect is the different decay properties of solutions of the
wave equation in spaces of odd and even dimensions.

\noindent{\textbf{Implementation and simulations.}} We demonstrate performance
of our technique in a couple of numerical simulations, mirroring those
presented in Section \ref{S:fast2D}. The data acquisition curve $S$ was the
part of the unit sphere lying under the $x_{3}=\sqrt{2}/2;$ the region
$\Omega_{0}$ was the lower half of the concentric unit ball. As a phantom
representing function $f(x)$ we used a linear combination of several slightly
smoothed characteristic functions of balls, supported within $\Omega_{0}.$
Such an acquisition geometry is a particular case of the open spherical
geometry (see Corollary \ref{T:geomsphere}), with $\mu=\pi/4.$ The detector
locations $\hat{y}(\theta,\varphi)$ were modeled by sampling $(\theta
,\varphi)$\ on a product grid on the unit sphere, with uniformly distributed
512 points in $\theta$ and 401 Gaussian discretization points in $\cos
\varphi.$ Since in 3D a solution of the wave equation with full rotational
symmetry is given by an explicit formula, we were able to compute explicitly
$g(t,\hat{y}(\theta,\varphi))$ at each detector location on a uniform grid in
$t$ with 257 nodes covering the interval $[0,2].$ In Figure~\ref{F:rec3D}(a)
we exhibit a 2D slice of the data $g(t,\hat{y}(\theta,\varphi))$ corresponding
to the 100-th grid node in $\theta$, or $\theta_{0}\thickapprox69^{\circ}.$ In
order to simulate the reduced data $\widetilde{g}(t,\hat{y}(\theta,\varphi))$
we replaced the values corresponding to $\varphi\in\lbrack0,\pi/4]$ by zeros.
In addition, we introduced a smooth cut-off in $t,$ by multiplying
$\widetilde{g}(t,\hat{y})$ with a smooth function $\chi(t)$ defined in Section
\ref{S:fast2D}. The resulting $\widetilde{g}(t,\hat{y})$ is shown in
Figure~\ref{F:rec3D}(b); again, only a 2D slice corresponding to $\theta
_{0}\thickapprox69^{\circ}$ is presented. The values of
$\widetilde{\mathcal{R}f}(\tau,\omega)$ reconstructed on the 4-th step of our
algorithm can be seen in Figure~\ref{F:rec3D}(e), for $\omega=\omega
(\theta_{0},\varphi)$ with the same $\theta_{0}.$ The wavy line in the latter
figure indicates the upper boundary of the error-free region. The image in
Figure~\ref{F:rec3D}(e) can be compared to Figure~\ref{F:rec3D}(d) that shows
the exact values of the Radon projections we seek to reconstruct. After the
final, fifth step of the algorithm we obtain an image visually
indistinguishable of that in the part~(d) of the figure. The relative
reconstruction error in $L^{\infty}$ norm was about 3$\cdot10^{-4}$ in this
simulation. A 2D slice of that error, corresponding to $\theta=\theta_{0}$ is
shown in Figure~\ref{F:rec3D}(f).

We also conducted a simulation involving the same test function $f(x)$ and the
same acquisition geometry but with added noise in the reduced data
$\widetilde{g}(t,\hat{y})$. The level of the noise was 50\% in the $L^{2}$
norm. A 2D slice of noisy data is presented in Figure~\ref{F:rec3D}(c). The
noise in the reconstructed Radon projections was suppressed significantly; the
relative error was just under 1\% in the $L^{\infty}$ norm and less than 0.8\%
in $L^{2}$ norm. This suggests that the operator mapping the wave data into
the Radon projections is even more smoothing in 3D than it is in 2D.

\noindent{\textbf{Acknowledgment.}} The authors would like to thank the
anonymous referees for the numerous suggestions that helped to improve the
manuscript. The second author is grateful for the partial support by the NSF
through the awards NSF/DMS-1211521 and NSF/DMS-1418772.

\end{document}